\def\picA{\begin{tikzpicture}[scale=1]
\node[thick,circle,draw,inner sep=2pt] (a)  at (-1,0) {};
\node[thick,circle,draw,inner sep=2pt] (b)  at ( 0,0) {};
\node[thick,circle,draw,inner sep=2pt] (c)  at ( 1,0) {};
\node[thick,circle,draw,inner sep=2pt] (d)  at (-1,1) {};
\node[thick,circle,draw,inner sep=2pt] (e)  at ( 0,1) {};
\node[thick,circle,draw,inner sep=2pt] (f)  at ( 1,1) {};
\draw[thick] (a)--(b)--(c)--(f)--(e)--(d)--(a)  (b)--(e);

\node at (0,-0.6) {$\displaystyle p_G(x)=\frac{16}{63}x+\frac{197}{315}x^2+\frac{38}{315}x^3$};
\end{tikzpicture}
\hfil
\begin{tikzpicture}[scale=1]
\node[thick,circle,draw,inner sep=2pt] (a)  at (-1,0) {};
\node[thick,circle,draw,inner sep=2pt] (b)  at (-0.4,0.5) {};
\node[thick,circle,draw,inner sep=2pt] (c)  at (-1,1) {};
\node[thick,circle,draw,inner sep=2pt] (d)  at ( 1,1) {};
\node[thick,circle,draw,inner sep=2pt] (e)  at ( 0.4,0.5) {};
\node[thick,circle,draw,inner sep=2pt] (f)  at ( 1,0) {};
\draw[thick] (a)--(b)--(c)--(a) (f)--(e)--(d)--(f)  (b)--(e);

\node at (0,-0.6) {$\displaystyle p_G(x)=\frac{11}{63}x+\frac{247}{315}x^2+\frac{13}{315}x^3$};
\end{tikzpicture}}
\def\picC{
\begin{tikzpicture}[scale=1]
\node[thick,circle,draw,inner sep=2pt] (a)  at (-1,0) {};
\node[thick,circle,draw,inner sep=2pt] (b)  at (0,0) {};
\node[thick,circle,draw,inner sep=2pt] (c)  at (1,0) {};
\node[thick,circle,draw,inner sep=2pt] (d)  at (-0.5,-0.866) {};
\draw[thick] (b)--(d)--(a)--(b)--(c);
\node at (0,-1.436) {$\displaystyle p_G(x)=\frac{5}{6}x+\frac{1}{6}x^2$};
\end{tikzpicture}
\hfil
\begin{tikzpicture}[scale=1]
\node[thick,circle,draw,inner sep=2pt] (a)  at (-1,0) {};
\node[thick,circle,draw,inner sep=2pt] (b)  at (0,0) {};
\node[thick,circle,draw,inner sep=2pt] (c)  at (1,0) {};
\node[thick,circle,draw,inner sep=2pt] (d)  at (-0.5,-0.866) {};
\node[thick,circle,draw,inner sep=2pt] (e)  at (0.5,-0.866) {};
\draw[thick] (d)--(a)--(b)--(c)--(e);
\draw[thick,dashed] (d)--(e);
\node at (0,-1.436) {$\displaystyle p_G(x)=\frac{1}{3}x+\frac{2}{3}x^2$};
\end{tikzpicture}
\hfil
\begin{tikzpicture}[scale=1]
\node[thick,circle,draw,inner sep=2pt] (a)  at (-1,0) {};
\node[thick,circle,draw,inner sep=2pt] (b)  at (0,0) {};
\node[thick,circle,draw,inner sep=2pt] (c)  at (1,0) {};
\node[thick,circle,draw,inner sep=2pt] (d)  at (-0.5,-0.866) {};
\node[thick,circle,draw,inner sep=2pt] (e)  at (0.5,-0.866) {};
\draw[thick] (b)--(d)--(a)--(b)--(c)--(e);
\node at (0,-1.436) {$\displaystyle p_G(x)=\frac{5}{12}x+\frac{7}{12}x^2$};
\end{tikzpicture}

\bigskip

\begin{tikzpicture}[scale=1]
\node[thick,circle,draw,inner sep=2pt] (a)  at (-1,0) {};
\node[thick,circle,draw,inner sep=2pt] (b)  at (0,0) {};
\node[thick,circle,draw,inner sep=2pt] (c)  at (1,0) {};
\node[thick,circle,draw,inner sep=2pt] (d)  at (-0.5,-0.866) {};
\node[thick,circle,draw,inner sep=2pt] (e)  at (0.5,-0.866) {};
\draw[thick] (b)--(a)--(d)--(e)--(c)--(b)--(e);
\node at (0,-1.436) {$\displaystyle p_G(x)=\frac{7}{15}x+\frac{8}{15}x^2$};
\end{tikzpicture}
\hfil
\begin{tikzpicture}[scale=1]
\node[thick,circle,draw,inner sep=2pt] (a)  at (-1,0) {};
\node[thick,circle,draw,inner sep=2pt] (b)  at (0,0) {};
\node[thick,circle,draw,inner sep=2pt] (c)  at (1,0) {};
\node[thick,circle,draw,inner sep=2pt] (d)  at (-0.5,-0.866) {};
\node[thick,circle,draw,inner sep=2pt] (e)  at (0.5,-0.866) {};
\draw[thick] (a)--(b)--(c)--(e)--(d)--(c) (e)--(a)--(d);
\node at (0,-1.436) {$\displaystyle p_G(x)=\frac{1}{2}x+\frac{1}{2}x^2$};
\end{tikzpicture}
\hfil
\begin{tikzpicture}[scale=1]
\node[thick,circle,draw,inner sep=2pt] (a)  at (-1,0) {};
\node[thick,circle,draw,inner sep=2pt] (b)  at (0,0) {};
\node[thick,circle,draw,inner sep=2pt] (c)  at (1,0) {};
\node[thick,circle,draw,inner sep=2pt] (d)  at (-0.5,-0.866) {};
\node[thick,circle,draw,inner sep=2pt] (e)  at (0.5,-0.866) {};
\draw[thick] (a)--(b)--(c)--(e)--(b)--(d)--(a)--(e)  (c)--(d);
\node at (0,-1.436) {$\displaystyle p_G(x)=\frac{8}{15}x+\frac{7}{15}x^2$};
\end{tikzpicture}

\bigskip

\begin{tikzpicture}[scale=1]
\node[thick,circle,draw,inner sep=2pt] (a)  at (-1,0) {};
\node[thick,circle,draw,inner sep=2pt] (b)  at (0,0) {};
\node[thick,circle,draw,inner sep=2pt] (c)  at (1,0) {};
\node[thick,circle,draw,inner sep=2pt] (d)  at (-0.5,-0.866) {};
\node[thick,circle,draw,inner sep=2pt] (e)  at (0.5,-0.866) {};
\draw[thick] (a)--(b)--(c)--(e)--(b)--(d)--(a);
\node at (0,-1.436) {$\displaystyle p_G(x)=\frac{8}{15}x+\frac{7}{15}x^2$};
\end{tikzpicture}
\hfil
\begin{tikzpicture}[scale=1]
\node[thick,circle,draw,inner sep=2pt] (a)  at (-1,0) {};
\node[thick,circle,draw,inner sep=2pt] (b)  at (0,0) {};
\node[thick,circle,draw,inner sep=2pt] (c)  at (1,0) {};
\node[thick,circle,draw,inner sep=2pt] (d)  at (-0.5,-0.866) {};
\node[thick,circle,draw,inner sep=2pt] (e)  at (0.5,-0.866) {};
\draw[thick] (d)--(a)--(e) (a)--(b)--(c);
\node at (0,-1.436) {$\displaystyle p_G(x)=\frac{7}{12}x+\frac{5}{12}x^2$};
\end{tikzpicture}
\hfil
\begin{tikzpicture}[scale=1]
\node[thick,circle,draw,inner sep=2pt] (a)  at (-1,0) {};
\node[thick,circle,draw,inner sep=2pt] (b)  at (0,0) {};
\node[thick,circle,draw,inner sep=2pt] (c)  at (1,0) {};
\node[thick,circle,draw,inner sep=2pt] (d)  at (-0.5,-0.866) {};
\node[thick,circle,draw,inner sep=2pt] (e)  at (0.5,-0.866) {};
\draw[thick] (a)--(b)--(c)--(e)--(b)--(d)--(e)--(a)--(d);
\node at (0,-1.436) {$\displaystyle p_G(x)=\frac{62}{105}x+\frac{43}{105}x^2$};
\end{tikzpicture}

\bigskip

\begin{tikzpicture}[scale=1]
\node[thick,circle,draw,inner sep=2pt] (a)  at (-1,0) {};
\node[thick,circle,draw,inner sep=2pt] (b)  at (0,0) {};
\node[thick,circle,draw,inner sep=2pt] (c)  at (1,0) {};
\node[thick,circle,draw,inner sep=2pt] (d)  at (-0.5,-0.866) {};
\node[thick,circle,draw,inner sep=2pt] (e)  at (0.5,-0.866) {};
\draw[thick] (c)--(b)--(a)--(d)--(e) (b)--(d);
\node at (0,-1.436) {$\displaystyle p_G(x)=\frac{19}{30}x+\frac{11}{30}x^2$};
\end{tikzpicture}
\hfil
\begin{tikzpicture}[scale=1]
\node[thick,circle,draw,inner sep=2pt] (a)  at (-1,0) {};
\node[thick,circle,draw,inner sep=2pt] (b)  at (0,0) {};
\node[thick,circle,draw,inner sep=2pt] (c)  at (1,0) {};
\node[thick,circle,draw,inner sep=2pt] (d)  at (-0.5,-0.866) {};
\node[thick,circle,draw,inner sep=2pt] (e)  at (0.5,-0.866) {};
\draw[thick] (c)--(b)--(a)--(d)--(e)--(b)--(d) (a)--(e);
\node at (0,-1.436) {$\displaystyle p_G(x)=\frac{13}{20}x+\frac{7}{20}x^2$};
\end{tikzpicture}
\hfil
\begin{tikzpicture}[scale=1]
\node[thick,circle,draw,inner sep=2pt] (a)  at (-1,0) {};
\node[thick,circle,draw,inner sep=2pt] (b)  at (0,0) {};
\node[thick,circle,draw,inner sep=2pt] (c)  at (1,0) {};
\node[thick,circle,draw,inner sep=2pt] (d)  at (-0.5,-0.866) {};
\node[thick,circle,draw,inner sep=2pt] (e)  at (0.5,-0.866) {};
\draw[thick] (a)--(d)--(b)--(e)--(c)--(d)--(e)--(a);
\node at (0,-1.436) {$\displaystyle p_G(x)=\frac{23}{35}x+\frac{12}{35}x^2$};
\end{tikzpicture}

\bigskip

\begin{tikzpicture}[scale=1]
\node[thick,circle,draw,inner sep=2pt] (a)  at (-1,0) {};
\node[thick,circle,draw,inner sep=2pt] (b)  at (0,0) {};
\node[thick,circle,draw,inner sep=2pt] (c)  at (1,0) {};
\node[thick,circle,draw,inner sep=2pt] (d)  at (-0.5,-0.866) {};
\node[thick,circle,draw,inner sep=2pt] (e)  at (0.5,-0.866) {};
\draw[thick] (d)--(a)--(b)--(c)--(d)--(b)--(e);
\node at (0,-1.436) {$\displaystyle p_G(x)=\frac{41}{60}x+\frac{19}{60}x^2$};
\end{tikzpicture}
\hfil
\begin{tikzpicture}[scale=1]
\node[thick,circle,draw,inner sep=2pt] (a)  at (-1,0) {};
\node[thick,circle,draw,inner sep=2pt] (b)  at (0,0) {};
\node[thick,circle,draw,inner sep=2pt] (c)  at (1,0) {};
\node[thick,circle,draw,inner sep=2pt] (d)  at (-0.5,-0.866) {};
\node[thick,circle,draw,inner sep=2pt] (e)  at (0.5,-0.866) {};
\draw[thick] (a)--(b)--(d)--(e)--(b)--(c);
\node at (0,-1.436) {$\displaystyle p_G(x)=\frac{23}{30}x+\frac{7}{30}x^2$};
\end{tikzpicture}
\hfil
\begin{tikzpicture}[scale=1]
\node[thick,circle,draw,inner sep=2pt] (a)  at (-1,0) {};
\node[thick,circle,draw,inner sep=2pt] (b)  at (0,0) {};
\node[thick,circle,draw,inner sep=2pt] (c)  at (1,0) {};
\node[thick,circle,draw,inner sep=2pt] (d)  at (-0.5,-0.866) {};
\node[thick,circle,draw,inner sep=2pt] (e)  at (0.5,-0.866) {};
\draw[thick] (c)--(e)--(d)--(a)--(b)--(d) (b)--(e);
\draw[thick,dashed] (b)--(c);
\node at (0,-1.436) {$\displaystyle p_G(x)=\frac{17}{30}x+\frac{13}{30}x^2$};
\end{tikzpicture}
}
\newcommand{\picDshort}[2]{
\begin{tikzpicture}[scale=1]
\node[thick,circle,draw,inner sep=2pt] (a)  at (-1.5,0) {};
\node[thick,circle,draw,inner sep=2pt] (b)  at (-0.5,0) {};
\node[thick,circle,draw,inner sep=2pt] (c)  at ( 0.5,0) {};
\node[thick,circle,draw,inner sep=2pt] (d)  at ( 1.5,0) {};
\node[thick,circle,draw,inner sep=2pt] (e)  at (-1,-0.866) {};
\node[thick,circle,draw,inner sep=2pt] (f)  at ( 0,-0.866) {};
\node[thick,circle,draw,inner sep=2pt] (g)  at ( 1,-0.866) {};
\draw[thick] #1;
\node at (0,-1.686) {\small $ \displaystyle p_G(x)=#2$};
\end{tikzpicture}}
\def\picD{
\begin{tikzpicture}[scale=1]
\node[thick,circle,draw,inner sep=2pt] (a)  at (-1.5,0) {};
\node[thick,circle,draw,inner sep=2pt] (b)  at (-0.5,0) {};
\node[thick,circle,draw,inner sep=2pt] (c)  at ( 0.5,0) {};
\node[thick,circle,draw,inner sep=2pt] (d)  at ( 1.5,0) {};
\draw[thick] (a)--(b)--(c)--(d);
\node at (0,-1.686) {\small $\displaystyle  p_G(x)=\frac{2}{3}x+\frac{1}{3}x^2$};
\end{tikzpicture} \hfil
\begin{tikzpicture}[scale=1]
\node[thick,circle,draw,inner sep=2pt] (a)  at (-1.5,0) {};
\node[thick,circle,draw,inner sep=2pt] (b)  at (-0.5,0) {};
\node[thick,circle,draw,inner sep=2pt] (c)  at ( 0.5,0) {};
\node[thick,circle,draw,inner sep=2pt] (d)  at ( 1.5,0) {};
\node[thick,circle,draw,inner sep=2pt] (e)  at (-1,-0.866) {};
\draw[thick] (e)--(a)--(b)--(c)--(d);
\node at (0,-1.686) {\small $\displaystyle  p_G(x)=\frac{1}{3}x+\frac{2}{3}x^2$};
\end{tikzpicture}\hfil
\begin{tikzpicture}[scale=1]
\node[thick,circle,draw,inner sep=2pt] (a)  at (-1.5,0) {};
\node[thick,circle,draw,inner sep=2pt] (b)  at (-0.5,0) {};
\node[thick,circle,draw,inner sep=2pt] (c)  at ( 0.5,0) {};
\node[thick,circle,draw,inner sep=2pt] (d)  at ( 1.5,0) {};
\node[thick,circle,draw,inner sep=2pt] (e)  at (-1,-0.866) {};
\draw[thick] (a)--(b)--(c)--(d) (b)--(e);
\node at (0,-1.686) {\small $\displaystyle  p_G(x)=\frac{7}{12}x+\frac{5}{12}x^2$};
\end{tikzpicture}

\bigskip

\begin{tikzpicture}[scale=1]
\node[thick,circle,draw,inner sep=2pt] (a)  at (-1.5,0) {};
\node[thick,circle,draw,inner sep=2pt] (b)  at (-0.5,0) {};
\node[thick,circle,draw,inner sep=2pt] (c)  at ( 0.5,0) {};
\node[thick,circle,draw,inner sep=2pt] (d)  at ( 1.5,0) {};
\node[thick,circle,draw,inner sep=2pt] (e)  at (-1,-0.866) {};
\node[thick,circle,draw,inner sep=2pt] (g)  at ( 1,-0.866) {};
\draw[thick] (e)--(a)--(b)--(c)--(d)--(g);
\node at (0,-1.686) {\small $\displaystyle  p_G(x)=\frac{2}{15}x+\frac{11}{15}x^2+\frac{2}{15}x^3$};
\end{tikzpicture}\hfil
\begin{tikzpicture}[scale=1]
\node[thick,circle,draw,inner sep=2pt] (a)  at (-1.5,0) {};
\node[thick,circle,draw,inner sep=2pt] (b)  at (-0.5,0) {};
\node[thick,circle,draw,inner sep=2pt] (c)  at ( 0.5,0) {};
\node[thick,circle,draw,inner sep=2pt] (d)  at ( 1.5,0) {};
\node[thick,circle,draw,inner sep=2pt] (e)  at (-1,-0.866) {};
\node[thick,circle,draw,inner sep=2pt] (g)  at ( 1,-0.866) {};
\draw[thick] (a)--(b) (e)--(b)--(c)--(d)--(g);
\node at (0,-1.686) {\small $\displaystyle  p_G(x)=\frac{1}{4}x+\frac{3}{4}x^2$};
\end{tikzpicture}\hfil 
\begin{tikzpicture}[scale=1]
\node[thick,circle,draw,inner sep=2pt] (a)  at (-1.5,0) {};
\node[thick,circle,draw,inner sep=2pt] (b)  at (-0.5,0) {};
\node[thick,circle,draw,inner sep=2pt] (c)  at ( 0.5,0) {};
\node[thick,circle,draw,inner sep=2pt] (d)  at ( 1.5,0) {};
\node[thick,circle,draw,inner sep=2pt] (f)  at ( 0,-0.866) {};
\node[thick,circle,draw,inner sep=2pt] (g)  at ( 1,-0.866) {};
\draw[thick] (a)--(b)--(f)--(c)--(d) (f)--(g);
\node at (0,-1.686) {\small $\displaystyle  p_G(x)=\frac{3}{10}x+\frac{17}{30}x^2+\frac{2}{15}x^3$};
\end{tikzpicture}

\bigskip

\begin{tikzpicture}[scale=1]
\node[thick,circle,draw,inner sep=2pt] (a)  at (-1.5,0) {};
\node[thick,circle,draw,inner sep=2pt] (b)  at (-0.5,0) {};
\node[thick,circle,draw,inner sep=2pt] (c)  at ( 0.5,0) {};
\node[thick,circle,draw,inner sep=2pt] (d)  at ( 1.5,0) {};
\node[thick,circle,draw,inner sep=2pt] (e)  at (-1,-0.866) {};
\node[thick,circle,draw,inner sep=2pt] (g)  at ( 1,-0.866) {};
\draw[thick] (a)--(b)--(c)--(d)  (e)--(b)  (c)--(g);
\node at (0,-1.686) {\small $\displaystyle  p_G(x)=\frac{7}{15}x+\frac{8}{15}x^2$};
\end{tikzpicture}\hfil
\begin{tikzpicture}[scale=1]
\node[thick,circle,draw,inner sep=2pt] (a)  at (-1.5,0) {};
\node[thick,circle,draw,inner sep=2pt] (b)  at (-0.5,0) {};
\node[thick,circle,draw,inner sep=2pt] (c)  at ( 0.5,0) {};
\node[thick,circle,draw,inner sep=2pt] (d)  at ( 1.5,0) {};
\node[thick,circle,draw,inner sep=2pt] (e)  at (-1,-0.866) {};
\node[thick,circle,draw,inner sep=2pt] (f)  at ( 0,-0.866) {};
\draw[thick] (a)--(b)--(c)--(d)  (e)--(b)  (f)--(b);
\node at (0,-1.686) {\small $\displaystyle  p_G(x)=\frac{11}{20}x+\frac{9}{20}x^2$};
\end{tikzpicture}\hfil
\picDshort{(b)--(a)--(e)--(f)--(g)--(d)--(c)}{\frac{2}{45}x+\frac{26}{45}x^2+\frac{17}{45}x^3}

\bigskip

\picDshort{(a)--(b)  (e)--(b)--(c)--(d)--(g)--(f)}{\frac{31}{360}x+\frac{67}{90}x^2+\frac{61}{360}x^3}\hfil
\picDshort{(e)--(a)--(b)--(f)  (b)--(c)--(g)--(d)}{\frac{41}{360}x+\frac{109}{180}x^2+\frac{101}{360}x^3}\hfil
\picDshort{(a)--(b)--(c)--(f)--(e)  (c)--(d)--(g)}{\frac{3}{20}x+\frac{9}{20}x^2+\frac{2}{5}x^3}

\bigskip

\picDshort{(a)--(b)--(f)--(c)--(d)  (e)--(b)  (c)--(g)}{\frac{1}{6}x+\frac{5}{6}x^2}\hfil
\picDshort{(a)--(b)--(f)--(g)--(d)  (e)--(b)--(c)}{\frac{13}{60}x+\frac{47}{60}x^2}\hfil
\picDshort{(a)--(b)--(f)--(c)--(d) (c)--(g)  (e)--(f)}{\frac{79}{360}x+\frac{11}{18}x^2+\frac{61}{360}x^3}

\bigskip

\picDshort{(a)--(b)--(f)--(c)--(d)  (e)--(f)--(g)}{\frac{17}{60}x+\frac{8}{15}x^2+\frac{11}{60}x^3}\hfil
\picDshort{(a)--(b)--(e)  (f)--(b)--(c)--(d)  (c)--(g)}{\frac{5}{12}x+\frac{7}{12}x^2}\hfil
\picDshort{(a)--(b)--(c)--(d)   (e)--(c)--(f)  (c)--(g)}{\frac{8}{15}x+\frac{7}{15}x^2}
}
\def\picE{\begin{tikzpicture}[scale=1,rotate=90]
\node[thick,circle,draw,inner sep=2pt] (a1) at (0,0) {};
\node[thick,circle,draw,inner sep=2pt] (a2) at (0,1) {};
\node[thick,circle,draw,inner sep=2pt] (a3) at (0,2) {};
\node[thick,circle,draw,inner sep=2pt] (a4) at (0,3) {};

\node[thick,circle,draw,inner sep=2pt] (b1) at (-1,0.5) {};
\node[thick,circle,draw,inner sep=2pt] (b2) at (-1,1.5) {};
\node[thick,circle,draw,inner sep=2pt] (b3) at (-1,2.5) {};

\node[thick,circle,draw,inner sep=2pt] (c1) at (1,1) {};
\node[thick,circle,draw,inner sep=2pt] (c2) at (1,2) {};

\draw[thick] (a1)--(b1)--(a2)--(b2)--(a1)--(b3)--(a2)  (a3)--(b1)--(a4)--(b2)--(a3)--(b3)--(a4)  (c1)--(a1)--(c2)--(a2)--(c1)--(a3)--(c2)--(a4)--(c1)--(c2);

\end{tikzpicture} \hfil
\begin{tikzpicture}[scale=1,rotate=90]
\node[thick,circle,draw,inner sep=2pt] (a1) at (0,0) {};
\node[thick,circle,draw,inner sep=2pt] (a2) at (0,1) {};
\node[thick,circle,draw,inner sep=2pt] (a3) at (0,2) {};
\node[thick,circle,draw,inner sep=2pt] (a4) at (0,3) {};

\node[thick,circle,draw,inner sep=2pt] (b1) at (-1,0.5) {};
\node[thick,circle,draw,inner sep=2pt] (b2) at (-1,1.5) {};
\node[thick,circle,draw,inner sep=2pt] (b3) at (-1,2.5) {};

\node[thick,circle,draw,inner sep=2pt] (c1) at (1,1) {};
\node[thick,circle,draw,inner sep=2pt] (c2) at (1,2) {};

\draw[thick] (a1)--(b1)--(a2)--(b2)--(a1)--(b3)--(a2)  (a3)--(b1)--(a4)--(b2)--(a3)--(b3)--(a4)  (c1)--(a1)--(c2)--(a2)--(c1)--(a3)--(c2)--(a4)--(c1);

\end{tikzpicture}}
\theoremstyle{plain}
\newtheorem{theorem}{Theorem}
\newtheorem{corollary}{Corollary}
\newtheorem{conjecture}{Conjecture}
\theoremstyle{definition}
\newtheorem{definition}{Definition}
\DeclareMathOperator{\vol}{vol}
\title{A forest building process on simple graphs}
\author{Zhanar Berikkyzy\thanks{Dept.\ of Mathematics, Iowa State University, Ames, IA, USA.\newline {\tt \{zhanarb,butler\}@iastate.edu}} \and
Steve Butler\footnotemark[1] \and%\thanks{Dept.\ of Mathematics, Iowa State University, Ames, IA, USA. {\tt butler@iastate.edu}}\and
Jay Cummings\thanks{Dept.\ of Mathematics and Statistics, Sacramento State University, Sacramento, CA, USA. \newline{\tt J.Cummings19@gmail.com}}\and
Kristin Heysse\thanks{Department of Mathematics, Statistics, and Computer Science, Macalester College, St. Paul, MN USA.\ \newline{\tt kheysse@macalester.edu}} \and
Paul Horn\thanks{Dept.\ of Mathematics, University of Denver, Denver, CO, USA. {\tt paul.horn@du.edu}}\and
Ruth Luo\thanks{Dept.\ of Mathematics, University of Illinois at Urbana-Champaign, Champaign, IL, USA. \newline{\tt ruthluo2@illinois.edu}}
\and
Brent Moran\thanks{Berlin Mathematical School, Freie Universit\"at Berlin, Germany. {\tt blm@zedat.fu-berlin.de}}}
\date{\empty}
\begin{document}
\maketitle

\begin{abstract}
Consider the following process on a simple graph without isolated vertices:  Order the edges randomly and keep an edge if and only if it contains a vertex which is not contained in some preceding edge.  The resulting set of edges forms a spanning forest of the graph.

The probability of obtaining $k$ components in this process for complete bipartite graphs is determined as well as a formula for the expected number of components in any graph.  A generic recurrence and some additional basic properties are discussed.
\end{abstract}

\section{Introduction}
Given a simple graph $G$ with no isolated vertices, we consider the following \emph{forest building} process to form a subgraph of $G$:
\begin{quote}
Consider $S$, the empty graph on $V(G)$, and some ordering of the edges of $G$, $e_1,e_2,\ldots,e_m$. For every edge, add the edge $e_j$ to $S$ if $e_j$ is incident to a vertex not incident to $e_i$ for all $i<j$.
\end{quote}
We note that this process can be thought of as starting with the empty graph and then considering edges one at a time in the ordering and adding to the graph only those edges which connect to at least one isolated vertex.

The resulting subgraph must span $V(G)$ because every vertex is incident to some edge, and so the first time we consider an incident edge we will keep that edge.  Further, we cannot form cycles since we would need to keep an edge both of whose vertices have previously been seen.  The result of the process is a spanning forest of $G$ without isolated vertices, and moreover any spanning forest without isolated vertices will occur for \emph{some} ordering of the edges.

This process was implicitly considered in the edge flipping problem (see \cite{BCCG, CG}).  In particular, let $F(G,k)$ denote the number of edge orderings in the forest building process so that the resulting forest has $k$ components, let $P(G,k)$ denote the probability that a randomly, uniformly chosen forest building process (i.e., a random ordering of the edges) produces a graph with $k$ components (note $m!P(G,k)=F(G,k)$), and let $p_G(x)=\sum_kP(G,k)x^k$ be the generating function of the $P(G,k)$ terms.  Then the following was shown in \cite{BCCG}.

\begin{theorem}[Butler-Chung-Cummings-Graham]\label{thm:complete}
For the complete graph $K_n$ we have 
\[
p_{K_n}(x) = \sum_k\frac{{n-1\choose n-2k,k,k-1}2^{n-2k}}{{2n-2\choose n}}x^k.
\]
\end{theorem}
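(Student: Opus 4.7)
My plan is to encode the random edge ordering by a \emph{first-edge function} and then reduce the desired probability to the distribution of a random matching on $K_n$. Replace the random ordering by independent uniform $[0,1]$ labels $X_e$, processed in increasing order. For each vertex $v$, let $f(v)$ be the other endpoint of the first edge incident to $v$ (equivalently, the neighbor $u$ for which $X_{\{u,v\}}$ is smallest among edges in $\delta(v)$). A direct check shows the kept edges are exactly $\{\{v,f(v)\}:v\in V\}$, and that the components of the output forest coincide with the connected components of the functional digraph of $f$. Acyclicity of the output forces every such component to contain a unique $2$-cycle, so the number of components equals the number of $2$-cycles of $f$. Observe further that an edge $e=\{u,v\}$ lies in a $2$-cycle of $f$ if and only if $X_e=\min\{X_{e'}:e'\in\delta(u)\cup\delta(v)\}$; call such $e$ \emph{special}. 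Comparing two special edges sharing a vertex immediately yields a contradiction, so the special edges always form a matching of $K_n$.

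By the vertex-transitivity of $K_n$, the probability that a specific $k$-matching $M_0$ equals the set of special edges is the same for every $k$-matching. Writing $N(n,k)=n!/(2^k\, k!\,(n-2k)!)$ for the number of $k$-matchings, this yields
\[
P(K_n,k)\;=\;N(n,k)\cdot\mathbb{P}(\text{special edges}=M_0),
\]
so it suffices to compute this single-matching probability for a canonical choice, say $M_0=\{\{1,2\},\{3,4\},\dots,\{2k-1,2k\}\}$.

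To compute $\mathbb{P}(\text{special}=M_0)$ I would apply inclusion--exclusion over matchings $M\supseteq M_0$:
\[
\mathbb{P}(\text{special}=M_0)=\sum_{M\supseteq M_0,\,M\text{ matching}}(-1)^{|M|-k}\,\mathbb{P}(M\subseteq\text{special}).
\]
The inner probability is a rank-type joint probability governed by how the $2$-neighborhoods of the edges of $M$ overlap. Since each pair of disjoint edges in $M$ has its $2$-neighborhoods sharing exactly four edges, $\mathbb{P}(M\subseteq\text{special})$ reduces to an explicit ratio of factorials. Summing over $M$ with the inclusion--exclusion signs and multiplying by $N(n,k)$ should recover the claimed $\binom{n-1}{n-2k,k,k-1}\,2^{n-2k}/\binom{2n-2}{n}$.

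The main obstacle is the final combinatorial simplification: assembling the inclusion--exclusion sum into the compact closed form. The identity $\binom{2n-2}{n}=(n-1)C_{n-1}$ (with $C_{n-1}$ the Catalan number) hints at a bijective proof involving ordered trees or Dyck-path structures, and I would look for such a bijection to give a cleaner derivation. Given the Cayley-like shape of $\binom{n-1}{n-2k,k,k-1}\,2^{n-2k}$, it is plausible that this bijection sends an edge-ordering yielding $k$ components to a labeled tree with $k$ marked root edges, each marked edge corresponding to one of the $2$-cycles of $f$.
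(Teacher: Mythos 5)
First, a point of comparison: the paper does not actually prove this theorem---it is quoted from \cite{BCCG}---so your argument has to stand entirely on its own. Your reduction is correct and genuinely attractive: the kept edges are exactly $\{\{v,f(v)\}:v\in V\}$, the output components are the weak components of the functional digraph of $f$, acyclicity forces each component to carry a unique $2$-cycle, and $\{u,v\}$ is a $2$-cycle precisely when $X_{\{u,v\}}$ is minimal over $\delta(u)\cup\delta(v)$ (this is the same ``first among $d(u)+d(v)-1$ edges'' event whose expectation drives the paper's Theorem~\ref{thm:numcomp}). The symmetry step $P(K_n,k)=N(n,k)\,\mathbb{P}(\mathrm{special}=M_0)$ and the inclusion--exclusion over matchings $M\supseteq M_0$ are likewise sound.

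The difficulty is that the proposal stops where the real work begins; two essential steps are missing. First, you assert that $\mathbb{P}(M\subseteq\mathrm{special})$ ``reduces to an explicit ratio of factorials'' because disjoint edges of $M$ share exactly four neighborhood edges---but that observation by itself yields nothing. The probability does turn out to be clean, namely $\prod_{i=1}^{|M|}\tfrac{1}{2n-2i-1}$, which one can prove by noting that the global minimum of $\bigcup_i N_i$ (a set of size $j(2n-2j-1)$ when $|M|=j$) must itself be an edge of $M$ and then recursing; you would need to supply such an argument. Second, even granting that evaluation, you are left with the identity
\[
\frac{n!}{2^k\,k!\,(n-2k)!}\sum_{j\ge k}(-1)^{j-k}\,\frac{(n-2k)!}{2^{j-k}(j-k)!\,(n-2j)!}\prod_{i=1}^{j}\frac{1}{2n-2i-1}
\;=\;\frac{\binom{n-1}{n-2k,\,k,\,k-1}2^{n-2k}}{\binom{2n-2}{n}},
\]
which is a nontrivial alternating hypergeometric sum. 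You explicitly flag this as ``the main obstacle'' and offer only the hope that a Catalan-number or labeled-tree bijection exists. Since this identity \emph{is} the content of the theorem, the proposal as written is a correct and promising reduction but not a proof.
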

(Throughout the paper we follow the convention that a sum with no bounds is interpreted to run over all values which have nonzero terms.)

We will establish the corresponding result for complete bipartite graphs in Section~\ref{sec:Kmn}.  We show how to determine the expected number of connected components of the forest building process in Section~\ref{sec:expected} for an arbitrary graph.  We give a recurrence relationship for the polynomial $p_G(x)$ in terms of polynomials for some of the subgraphs in Section~\ref{sec:recurrence} and use this to establish some basic results.   Finally, we give some concluding remarks in Section~\ref{sec:conclusion}.

\section{Complete bipartite graphs}\label{sec:Kmn}
We now consider the analogue of Theorem~\ref{thm:complete} for complete bipartite graphs $K_{s,t}$. 

\begin{theorem}\label{thm:bipartite}
For the complete bipartite graph $K_{s,t}$, we have
\[
p_{K_{s,t}}(x)=\sum_k\frac{k(s+t){s\choose k}{t\choose k}}{st{s+t \choose s}}x^k.
\]
\end{theorem}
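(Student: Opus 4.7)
The plan is to recast the number of components as a matching statistic, compute its factorial moments via a sequential symmetry argument, and then invert.

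\emph{Reformulation.} An edge $e=uv$ is kept by the process exactly when $e$ is the first edge at $u$ or the first edge at $v$ in the ordering. Call $e$ \emph{double-first} if it is first at both endpoints simultaneously, and let $D$ denote the (random) set of double-first edges. Since each vertex has a unique first incident edge, $D$ is automatically a matching. The kept edges form the set $\{\text{first edge at } v : v\in V\}$, which has $|V|-|D|$ distinct elements, so the resulting forest has exactly $|D|$ components.

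\emph{Factorial moment.} The key claim is that for every matching $M$ of size $k$ in $K_{s,t}$,
\[
\Pr[M\subseteq D]=\frac{1}{(s+t-1)(s+t-2)\cdots(s+t-k)}.
\]
The event $\{M\subseteq D\}$ is equivalent to: for every $u\in V(M)$, the first edge at $u$ is the (unique) $M$-edge at $u$. Consequently, the earliest edge of the ordering meeting $V(M)$ must itself lie in $M$; since there are $k$ edges of $M$ among the $k(s+t-k)$ edges meeting $V(M)$, this has probability $1/(s+t-k)$ by the symmetry of the uniform ordering. Conditioning on which $M$-edge $e_{i_1}$ emerges first preserves the uniform relative order of the remaining edges, so the identical argument applied to $M\setminus\{e_{i_1}\}$ on $V(M)\setminus V(e_{i_1})$ contributes a factor $1/(s+t-k+1)$, and iterating $k$ times produces the product.

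\emph{Assembly and extraction.} Since $K_{s,t}$ contains $\binom{s}{k}\binom{t}{k}k!$ matchings of size $k$, linearity of expectation yields
\[
\mathbb{E}\!\left[\binom{|D|}{k}\right]=\frac{\binom{s}{k}\binom{t}{k}k!}{(s+t-1)\cdots(s+t-k)}=\frac{\binom{s}{k}\binom{t}{k}}{\binom{s+t-1}{k}}.
\]
The identity $x^n=\sum_k\binom{n}{k}(x-1)^k$ applied with $n=|D|$ and followed by taking expectations gives
\[
p_{K_{s,t}}(x)=\mathbb{E}[x^{|D|}]=\sum_k\frac{\binom{s}{k}\binom{t}{k}}{\binom{s+t-1}{k}}(x-1)^k,
\]
and extracting the coefficient of $x^k$ produces the alternating binomial sum $\sum_{j\ge k}(-1)^{j-k}\binom{j}{k}\binom{s}{j}\binom{t}{j}/\binom{s+t-1}{j}$, which evaluates by Chu--Vandermonde to $k(s+t)\binom{s}{k}\binom{t}{k}/(st\binom{s+t}{s})$, matching the theorem. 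The main obstacle I anticipate is justifying the sequential conditioning cleanly---in particular, verifying at each stage that, conditional on the previously identified $M$-edges having been the minima of their respective incidence sets, the next ``first edge meeting the remaining matched vertices'' is still uniform over the relevant edge set---while the final Chu--Vandermonde simplification is routine once the factorial moments are in hand.
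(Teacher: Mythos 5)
Your argument is correct, and it is genuinely different from the paper's proof. The paper fixes $s,t$ and introduces intermediate states $Q_{s,t}(a,b,\ell)$ (numbers of unused vertices on each side, components still needed), writes down the one-step recurrence \eqref{eq:recur}, and then \emph{guesses} the closed form $Q_{s,t}(a,b,\ell)={b\choose \ell}{s+t-b-1\choose a-\ell}/{s+t-1\choose a}$ and verifies it; the formula for $P(K_{s,t},k)$ drops out at the terminal state. You instead identify the component count with $|D|$, the number of ``double-first'' edges (which is exactly the observation underlying the paper's Theorem~\ref{thm:numcomp}), note that $D$ is a matching, and compute the binomial moments $\mathbb{E}{|D|\choose k}={s\choose k}{t\choose k}/{s+t-1\choose k}$ by summing $\Pr[M\subseteq D]$ over $k$-matchings, then invert. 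The sequential conditioning you flag is sound: decompose the event $\{M\subseteq D\}$ according to which edge of $M$ is earliest among the $k(s+t-k)$ edges meeting $V(M)$ (that earliest edge must lie in $M$, and is then automatically double-first); conditioned on a particular $e\in M$ being that minimum, the relative order of the remaining edges is still uniform and the residual event concerns only edges meeting $V(M)\setminus V(e)$, so induction applies. Your route has the advantage of producing the answer rather than verifying a guessed one, and the intermediate identity $\mathbb{E}{\kappa\choose k}={s\choose k}{t\choose k}/{s+t-1\choose k}$ is clean (its $k=1$ case recovers the paper's expected value $st/(s+t-1)$); the cost is the final coefficient extraction, which is a terminating hypergeometric evaluation (Pfaff--Saalsch\"utz type rather than literally Chu--Vandermonde) --- correct, but deserving a line of proof or at least a precise citation. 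The paper's state-recurrence method, by contrast, transfers in principle to complete multipartite graphs (where your product formula for $\Pr[M\subseteq D]$ would no longer close up so neatly), which is why the authors frame it that way.
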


\begin{proof}
We need to show that $P(K_{s,t},k)={k(s+t){s\choose k}{t\choose k}}/({st{s+t \choose s}})$, and to do this we will find it useful to work on a recurrence which incorporates intermediate states in the process on $K_{s,t}$.  In particular, for $s$ and $t$ fixed let $Q_{s,t}(a,b,\ell)$ be the probability that the process will finish with $k$ components \emph{given} that we have already used $s-a$ vertices in the part with size $s$ and $t-b$ vertices in the part with size $t$ and currently have $k-\ell$ components.

From this definition we immediately recover some initial conditions as follows:
\begin{equation}\label{eq:init}
Q_{s,t}(a,0,\ell)=Q_{s,t}(0,b,\ell)=\begin{cases}
1 & \text{if }\ell=0,\\
0 & \text{if }\ell\neq0,
\end{cases}
\end{equation}
since we cannot introduce any new components (i.e., we have already saturated one side of the complete bipartite graph and so every new edge will hook onto an existing tree).  Hence the current number of components will no longer change. If we have the required $k$ components ($\ell=0$), then we must succeed, and otherwise we must fail.  We also adopt the convention that $Q_{s,t}(a,b,-1)=0$ for all $a$ and $b$ (i.e., we have too many components and we cannot reduce the number of components by adding new edges).

We also note that there is a simple recurrence that must be satisfied.  This is because if we are in a given situation corresponding to a given choice of $a,b,\ell$ then there are precisely three things that can happen which affect the state of the current process.
\begin{itemize}
\item We add an edge incident to a new vertex in the part of size $s$ and an old vertex in the part of size $t$.  There are $a(t-b)$ such edges and then we are in a situation corresponding to $a-1,b,\ell$.
\item We add a new edge incident to an old vertex in the part of size $s$ and a new vertex in the part of size $t$.  There are $(s-a)b$ such edges and then we are in a situation corresponding to $a,b-1,\ell$.
\item We add a new edge incident to new vertices in both parts.  There are $ab$ such edges and then we are in a situation corresponding to $a-1,b-1,\ell-1$.
\end{itemize}
Altogether there are $st-(s-a)(t-b)=at+bs-ab$ such edges.  Putting this together gives the following recurrence:
\begin{multline}\label{eq:recur}
Q_{s,t}(a,b,\ell)=
\frac{a(t-b)}{at+bs-ab}Q_{s,t}(a-1,b,\ell)+
\frac{(s-a)b}{at+bs-ab}Q_{s,t}(a,b-1,\ell)\\+
\frac{ab}{at+bs-ab}Q_{s,t}(a-1,b-1,\ell-1).
\end{multline}

Solving such a recurrence is nontrivial, but verifying a recurrence is straightforward.  In this case, the solution to the recurrence is
\[
Q_{s,t}(a,b,\ell)=\frac{{b\choose \ell}{s+t-b-1\choose a-\ell}}{{s+t-1\choose a}}=\frac{{a\choose \ell}{s+t-a-1\choose b-\ell}}{{s+t-1\choose b}}.
\]
We will omit the verification that this satisfies \eqref{eq:init} and \eqref{eq:recur}, as this involves standard computations.

Finally, we have
\[
P(K_{s,t},k)=Q_{s,t}(s,t,k)=\frac{{t\choose k}{s-1\choose s-k}}{{s+t-1\choose s}}=\frac{k(s+t){s\choose k}{t\choose k}}{st{s+t \choose s}}.\qedhere
\]
\end{proof}

\subsection{Complete multipartite graphs}
For the proof of Theorem~\ref{thm:bipartite}, we implicitly relied on the high degree of symmetry in the graph. Specifically, at every stage we have some variant of a complete bipartite graph to work with.  As a result, the basic approach we used for complete bipartite graphs would also work for complete multipartite graphs and would give similar initial conditions as in \eqref{eq:init} and similar recurrence as in \eqref{eq:recur}.

The problem lies in solving the recurrence.  For the complete bipartite graph, the solution to the recurrence was found by inspection of small cases, where it was noticed that the resulting probabilities consisted of products of small prime factors.  For tripartite graphs and larger, this is no longer the case and small cases can involve large prime factors.  So we have no natural candidate for a general solution.  As an example, for the graph $K_{3,3,3}$ we have
\[
p_{K_{3,3,3}}(x)=
 \frac{ 1992}{26125}x
+\frac{11724}{26125}x^2
+\frac{10951}{26125}x^3
+\frac{ 1458}{26125}x^4,
\]
and we note $11724=2^2{\cdot}3{\cdot}977$ for its prime factorization.  It is an open problem to find simple closed form solutions for complete multipartite graphs.

\section{Expected number of components}\label{sec:expected}
For a given graph $G$ the expected number of components in the forest building process can be determined by considering $\sum_k kP(G,k)$.  However this involves first determining $P(G,k)$ which can be difficult.  There is an easier way to determine the number of expected components, as shown in the next result.

\begin{theorem}\label{thm:numcomp}
Given a graph $G$ without isolated vertices, the expected number of connected components in the forest building process is
\begin{equation}\label{eq:expected}
\sum_{uv\in E(G)}\frac1{d(u)+d(v)-1},
\end{equation}
where $d(u)$ and $d(v)$ indicates the degree of $u$ and $v$, respectively.
\end{theorem}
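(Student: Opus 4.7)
The plan is to apply linearity of expectation edge-by-edge. Since every resulting subgraph is a spanning forest of $G$ with no isolated vertices, the number of components $c$ is related to the number of kept edges $|S|$ by $c = n - |S|$, where $n = |V(G)|$. So determining the expected number of components reduces to computing $\mathbb{E}[|S|] = \sum_{e \in E(G)} \Pr[e \text{ is kept}]$.

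Next I would compute $\Pr[e \text{ is kept}]$ for a fixed $e = uv$. By the rule defining the process, $e$ is kept iff, when $e$ is considered, at least one of its endpoints has appeared in no earlier edge in the ordering. Equivalently, $e$ is kept iff $e$ precedes all other edges incident to $u$, or $e$ precedes all other edges incident to $v$ (or both). Writing $A_u$ and $A_v$ for these two events, a uniform random ordering restricted to any given set of edges is itself uniform, so
\[
\Pr[A_u] = \frac{1}{d(u)}, \qquad \Pr[A_v] = \frac{1}{d(v)}, \qquad \Pr[A_u \cap A_v] = \frac{1}{d(u)+d(v)-1},
\]
the last because $e$ must be first among the $d(u)+d(v)-1$ edges incident to $u$ or $v$. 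Inclusion-exclusion then yields
\[
\Pr[e \text{ kept}] = \frac{1}{d(u)} + \frac{1}{d(v)} - \frac{1}{d(u)+d(v)-1}.
\]

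Finally I would sum over all edges and simplify. The key bookkeeping step is
\[
\sum_{uv \in E(G)}\left(\frac{1}{d(u)} + \frac{1}{d(v)}\right) = \sum_{v \in V(G)} d(v)\cdot\frac{1}{d(v)} = n,
\]
since the vertex $v$ appears in exactly $d(v)$ edges. Substituting and using $\mathbb{E}[c] = n - \mathbb{E}[|S|]$ collapses everything to the claimed expression $\sum_{uv \in E(G)} 1/(d(u)+d(v)-1)$.

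I do not expect a serious obstacle here. The only subtle point is recognizing that "$e$ is kept" decomposes cleanly as $A_u \cup A_v$ (rather than needing to track the full history of the process), and that each $\Pr[\cdot]$ depends only on the relative order of a small specified set of edges in a uniform random permutation — after which inclusion-exclusion and a standard double-counting finish the argument.
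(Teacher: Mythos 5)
Your proof is correct, but it reaches the formula by a different decomposition than the paper. The paper applies linearity of expectation directly to the component count: a tree in the final forest is opened exactly when an edge arrives with both endpoints unseen, and the probability that $uv$ does this is the probability that $uv$ precedes all other $d(u)+d(v)-2$ edges meeting $u$ or $v$, namely $1/(d(u)+d(v)-1)$; summing these indicators immediately gives \eqref{eq:expected}. You instead apply linearity to the number of \emph{kept} edges, compute $\Pr[e\text{ kept}]$ by inclusion--exclusion over the events $A_u$, $A_v$, and convert back to components via the forest identity $c=n-|S|$. Note that your event $A_u\cap A_v$ is precisely the paper's event ``$uv$ opens a new tree,'' so the same key probability $1/(d(u)+d(v)-1)$ appears in both arguments; your route just wraps it in the extra bookkeeping of the double count $\sum_{uv\in E(G)}\bigl(\tfrac{1}{d(u)}+\tfrac{1}{d(v)}\bigr)=n$, which then cancels against the $n$ from $c=n-|S|$. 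The paper's version is more economical; yours has the minor side benefit of also producing the expected number of retained edges, $\mathbb{E}[|S|]=n-\sum_{uv\in E(G)}1/(d(u)+d(v)-1)$, along the way. All the individual steps you cite (uniformity of the induced order on any fixed edge subset, the count $d(u)+d(v)-1$ for the union of the two stars, and the spanning property guaranteed by the absence of isolated vertices) are sound.
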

\begin{proof}
Let us consider the forest building process.  When we come to a particular edge one of three things occurs.  Namely, we have already seen both vertices, in which case we discard the edge; we have only seen one vertex, in which case we add the edge onto an already existing tree in our forest; or we have not seen either vertex, in which case we start a new tree in our forest.  To determine the number of connected components, we have to consider the number of times an edge is added and neither vertex has been seen before.

For the edge $uv\in E(G)$, there are $d(u)+d(v)-1$ edges which contain either vertex $u$ or vertex $v$.  The probability that in a random ordering we will see $uv$ first, i.e., this edge adds a new component, is $1/(d(u)+d(v)-1)$.  Therefore, by linearity of expectation, the number of expected components is what is given in \eqref{eq:expected}.
\end{proof}

We note that in general just knowing the degrees of the vertices on each edge is not enough to determine the $P(G,k)$.  As a simple example, consider the pair of graphs in Figure~\ref{fig:pair}.  For the two graphs the edges collectively have the same corresponding degrees of vertices, and hence we have that the expected number of components is $28/15$, but the individual probabilities are distinct.

\begin{figure}
\centering

\picA

\caption{Two graphs with equal edge-degree sequences and different polynomials $P_G(x)$}
\label{fig:pair}
\end{figure}

We can apply Theorem~\ref{thm:numcomp} to the complete graphs and complete bipartite graphs to establish the following.

\begin{corollary}
For the complete graph $K_n$ with $n\ge 2$, the number of expected components is $n(n-1)/(4n-6)$.  For the complete bipartite graph $K_{s,t}$, the number of expected components is $st/(s+t-1)$.
\end{corollary}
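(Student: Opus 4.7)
The plan is to apply Theorem~\ref{thm:numcomp} directly to each of the two families, using the fact that both graphs are edge-transitive so every edge contributes the same summand.

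For the complete graph $K_n$, every vertex has degree $n-1$, so each edge $uv$ satisfies $d(u)+d(v)-1 = 2(n-1)-1 = 2n-3$. Since $K_n$ has $\binom{n}{2}$ edges, the sum in \eqref{eq:expected} collapses to
\[
\binom{n}{2}\cdot\frac{1}{2n-3} = \frac{n(n-1)}{4n-6},
\]
which is the first claim.

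For the complete bipartite graph $K_{s,t}$, every edge joins a vertex of degree $t$ to a vertex of degree $s$, so each edge contributes $1/(s+t-1)$. Since $K_{s,t}$ has $st$ edges, the total expected number of components is
\[
st\cdot\frac{1}{s+t-1} = \frac{st}{s+t-1},
\]
giving the second claim.

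The whole argument is one short paragraph of arithmetic, so there is no real obstacle; the substantive content is entirely contained in Theorem~\ref{thm:numcomp}. If desired, I would also cross-check the bipartite formula against Theorem~\ref{thm:bipartite} by evaluating $\sum_k k\cdot k(s+t)\binom{s}{k}\binom{t}{k}/(st\binom{s+t}{s})$ in closed form via a Vandermonde-type identity, but this is optional and not needed for the proof.
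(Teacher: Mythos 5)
Your proposal is correct and follows exactly the paper's own argument: apply Theorem~\ref{thm:numcomp}, note that every edge of $K_n$ contributes $1/(2n-3)$ over $\binom{n}{2}$ edges and every edge of $K_{s,t}$ contributes $1/(s+t-1)$ over $st$ edges. The optional cross-check against Theorem~\ref{thm:bipartite} matches the paper's subsequent remark that the formulas can also be verified via combinatorial identities, but it is not part of the proof itself.
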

\begin{proof}
For $K_n$ we have for each edge that $d(u)+d(v)-1=2n-3$ and that there are ${n\choose 2}$ such edges.  Now apply Theorem~\ref{thm:numcomp}.

For $K_{s,t}$ we have for each edges that $d(u)+d(v)-1=s+t-1$ and that there are $st$ such edges.  Now apply Theorem~\ref{thm:numcomp}.
\end{proof}

The results of the preceding corollary can also be proven directly without the aid of Theorem~\ref{thm:numcomp} by means of combinatorial identities.  For example for the complete graph we need to show that
\[
\sum_k kP(K_n,k)=\sum_k 
\frac{k2^{n-2k}{n-1\choose n-2k,k,k-1}}{{2n-2\choose n}}
=\frac{n(n-1)}{4n-6}.
\]
To start this we begin by noting,
\begin{equation}\label{eq:combident}
\sum_K2^{N-2K}{N\choose K}{N-K\choose N-2K}={2N\choose N}.
\end{equation}
The right hand side counts the number of $N$-element sets from $\{1,2,\ldots,2N\}$.  We now show that the left hand side counts the same thing.  First put the elements into $N$ pairs, i.e., $\{1,N+1\}$, $\{2,N+2\}$, \dots, $\{N,2N\}$, we now choose $K$ of these pairs (in ${N\choose K}$ ways) and take both elements from these pairs; from the remaining $N-K$ pairs we choose $N-2K$ of these pairs (in ${N-K\choose N-2K}$ ways) and from each of these pairs we take one element ($2^{N-2K}$ ways) to form our set with $N$ elements.  As $K$ runs over all possibilities we will form all $N$ element subsets giving the result.  Now setting $N=n-2$ and $K=k-1$ then \eqref{eq:combident} becomes
\[
\sum_k2^{n-2k}{n-2\choose k-1}{n-k-1\choose n-2k}={2n-4\choose n-2}=\frac{n}{4n-6}{2n-2\choose n}.
\]
Noting that ${n-2\choose k-1}{n-k-1\choose n-2k}=\frac{k}{n-1}{n-1\choose n-2k,k,k-1}$ and rearranging the terms then gives the desired result.

A similar, and simpler, argument works for complete bipartite graphs.

Looking at the results for the complete graph, we have that the number of connected components in the forest building process tends to $n/4$.  In general we can apply Theorem~\ref{thm:numcomp} to conclude that for any $d$-regular graph, where $d=d(n)$ tends to infinity with $n$, the expected number of connected components in the forest building process also tends to $n/4$.  Similarly, for $(p,q)$-biregular graphs (i.e., bipartite graphs with parts of sizes $s,t$ where the degrees in one part are all $p$ and the degrees in the other part are all $q$), Theorem~\ref{thm:numcomp} can be used to show if $p=p(n)$ and $q=q(n)$ tend to infinity with $n$, the expected number of connected components in the forest building process tends to $st/(s+t)$.

\subsection{Components tend to emerge quickly}

As we go through the process for a dense graph we should expect that many edges at the start are initially used and most edges at the end will be discarded.  In particular we should expect that the final components will emerge quickly.  We can make this more precise for complete graphs.

For an edge $e$ in the graph we let $d'(e)$ denote the number of edges which are incident to $e$, equivalently if $e=uv$ then $d'(e)=d(u)+d(v)-2$.  Let $B$ be a uniformly, randomly chosen ordering of the edges of $G$, and let $\kappa(G,B)$ denote the number of connected components in the forest building process of $G$ with respect to edge ordering $B$.  Theorem~\ref{thm:numcomp} can now be restated as
\[
\mathbb{E} (\kappa(G,B)) = \sum_{e \in E(G)} \frac{1}{d'(e) + 1}.
\]
For graphs where $d'(e)$ is constant this simplifies nicely.  For general graphs we can get a bound on this expression, as done in the following corollary.

\begin{corollary}\label{cor:bound}
For any graph $G$ with $n$ vertices and $m$ edges, let $B$ be a random ordering of the edges of $G$.    Then
\[
\mathbb{E} (\kappa(G,B)) \geq \frac{ m }{\mathbb{E}(d') + 1}.
\]
Let $H$ be a randomly chosen graph $n$ vertices and $m$ edges (i.e., $H\in G(n,m)$).  Then
\[
\mathbb{E} \big(\kappa(H,B)\big) \geq \frac{mn + m}{4m + n - 3}.
\]
\end{corollary}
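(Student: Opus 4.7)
The plan is to derive the first bound by Cauchy--Schwarz on the sum given in Theorem~\ref{thm:numcomp}, and then obtain the second bound by a second-moment computation on $G(n,m)$. For the first inequality, Theorem~\ref{thm:numcomp} tells us that $\mathbb{E}(\kappa(G,B))=\sum_{e\in E(G)}1/(d'(e)+1)$, and the Cauchy--Schwarz inequality yields
\[
\Bigl(\sum_{e\in E(G)}(d'(e)+1)\Bigr)\Bigl(\sum_{e\in E(G)}\frac{1}{d'(e)+1}\Bigr)\ge m^{2}.
\]
Since $\sum_{e}(d'(e)+1)=m\bigl(\mathbb{E}(d')+1\bigr)$ by the definition of $\mathbb{E}(d')$ as the average of $d'$ over a uniformly chosen edge, dividing gives the first claim.

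For the second inequality, I would first rewrite the denominator in a form amenable to random-graph analysis. Since the edge $uv$ contributes $d(u)+d(v)-2$ to $\sum_{e}d'(e)$ and each vertex $v$ appears in exactly $d(v)$ edges, $\sum_{e\in E(H)}(d'_{H}(e)+1)=\sum_{v}d_{H}(v)^{2}-m$. Applying the first inequality to $H$ gives
\[
\mathbb{E}_{B}\bigl(\kappa(H,B)\bigr)\ge \frac{m^{2}}{\sum_{v}d_{H}(v)^{2}-m}.
\]
Taking the expectation over $H\in G(n,m)$ and applying Jensen's inequality $\mathbb{E}[1/X]\ge 1/\mathbb{E}[X]$ for positive $X$ converts the right-hand side into $m^{2}\big/\bigl(\mathbb{E}_{H}[\sum_{v}d_{H}(v)^{2}]-m\bigr)$.

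The remaining task, which is the computational core of the proof, is to evaluate $\mathbb{E}_{H}[\sum_{v}d_{H}(v)^{2}]$. I would write $d_{H}(v)^{2}=d_{H}(v)+d_{H}(v)(d_{H}(v)-1)$: the first piece sums to $2m$, while $\sum_{v}d_{H}(v)(d_{H}(v)-1)$ counts ordered pairs of distinct edges sharing a common vertex. There are $n(n-1)(n-2)$ ordered triples $(v,u_{1},u_{2})$ of distinct vertices, and the probability in $G(n,m)$ that both $u_{1}v$ and $u_{2}v$ appear is $m(m-1)\big/\bigl(\binom{n}{2}(\binom{n}{2}-1)\bigr)$. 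Using the identity $\binom{n}{2}(\binom{n}{2}-1)=n(n-1)(n-2)(n+1)/4$, this simplifies to $4m(m-1)/(n+1)$, so $\mathbb{E}_{H}[\sum_{v}d_{H}(v)^{2}]-m=m(n+4m-3)/(n+1)$, and substituting produces the stated bound $(mn+m)/(4m+n-3)$. The main obstacle is less any single calculation than keeping the two applications of Jensen's inequality (one pointwise in $H$, one over the distribution of $H$) straight, so that the random-graph expectation is taken only after $\sum_{v}d_{H}(v)^{2}$ has emerged as the relevant linear quantity.
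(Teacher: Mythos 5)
Your proof is correct and follows essentially the same strategy as the paper: a convexity estimate (your Cauchy--Schwarz step is the same AM--HM inequality that the paper obtains from Jensen applied to $1/(d'+1)$) followed by a first-moment computation in $G(n,m)$. The only substantive difference is in how that moment is computed. The paper gets $\mathbb{E}(d')$ in one line by conditioning on the edge $e=uv$ being present: each of the $2(n-2)$ other potential edges at $u$ or $v$ then appears with probability $(m-1)/\big({n\choose 2}-1\big)$, so $\mathbb{E}(d')=2(n-2)(m-1)/\big({n\choose 2}-1\big)$, and substituting into the first bound gives the result directly. You instead route through $\sum_v d_H(v)^2$ and count cherries; since $\sum_{e\in E(H)}(d'(e)+1)=\sum_v d_H(v)^2-m$, your quantity $\mathbb{E}_H\big[\sum_v d_H(v)^2\big]-m$ equals $m\big(\mathbb{E}(d')+1\big)$, so the two computations are interchangeable, and your second application of Jensen (over the distribution of $H$) is what the paper folds into a single application over the joint randomness of the graph and the edge. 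Both arrive at the same bound $(mn+m)/(4m+n-3)$; yours is a bit longer but equally valid, and your care in noting that $\sum_v d_H(v)^2-m\ge m>0$ (so that Jensen for $1/x$ applies) is a detail the paper leaves implicit.
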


\begin{proof}
Using linearity of expectation and an application of Jensen's inequality we have the following:
\begin{multline*}
\mathbb{E}(\kappa(G, B))
=\mathbb{E}(\mathbb{E}(\kappa(G, B))) 
= \mathbb{E}  \bigg(\sum_{e \in E(G)} \frac{1}{d'(e) + 1}\bigg) 
\\= \sum_{e \in E(G)} \mathbb{E} \bigg( \frac{1}{d'(e) + 1} \bigg) 
\geq \frac{ m }{ \mathbb{E}(d') + 1 }.
\end{multline*}
Now consider a graph $H\in G(n,m)$.  For an edge $e=uv$ there are $n-2$ possible edges incident to $u$ and $n-2$ possible edges incident to $v$.  The probability that any one such edge exists in the graph will be $(m-1)/({n\choose 2}-1)$.  So we have that $\mathbb{E}(d') = 2(n-2)(m-1)/({n\choose 2}-1)$, putting this in and simplifying gives the result.
\end{proof}

We now apply this to the complete graph by noting that if we pause the forest building process after $m$ edges then this is equivalent to looking at the forest building process on a graph in $G(n,m)$.  By Corollary~\ref{cor:bound} this indicates that the expected number of edges in the process is $(mn+m)/(4m+n-3)$.  When $m\gg n$ then this is approximately $n/4$, while on the other hand the final number of expected components is also approximately $n/4$.  We can conclude that once we have seen a superlinear number of edges that we will typically have already formed most of the components.  (In terms of random graphs, this says that random graphs with super linear, i.e., $\omega(n)$, number of edges have very few, i.e., $o(n)$, isolated vertices.)

With a bit more work we can determine the exact expression for the expected number of components in a graph in $G(n,m)$.

\begin{corollary}\label{cor:randomexp}
Let $G\in G(n, m)$ denote a random graph with $n$ vertices and $m$ edges.  Then
\[
  \mathbb{E} \left(\kappa (G, B) \right) =
    \frac{{n \choose 2}}{(2n - 3)}
    \bigg(
      1 - \frac{ {{n \choose 2} - m \choose 2n - 3} }
      { {{n \choose 2} \choose 2n - 3} }
    \bigg).
\]
\end{corollary}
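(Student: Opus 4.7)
The plan is to apply Theorem~\ref{thm:numcomp} conditionally on the random graph $G$, then take a further expectation over $G\in G(n,m)$ and exploit the edge-symmetry of the $G(n,m)$ model. By iterated expectation and linearity of expectation,
\[
\mathbb{E}\bigl(\kappa(G,B)\bigr)=\sum_{e=uv\in E(K_n)}\mathbb{E}_G\bigg[\frac{\mathbf{1}[e\in E(G)]}{d_G(u)+d_G(v)-1}\bigg],
\]
and by the symmetry of the $G(n,m)$ model (any permutation of vertices leaves the distribution invariant) each of these $\binom{n}{2}$ terms is equal. So it suffices to fix a single potential edge $e=uv$, compute its contribution, and multiply by $\binom{n}{2}$.

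Next I would condition on the event $e\in E(G)$, which has probability $m/\binom{n}{2}$. Given this event, $d_G(u)+d_G(v)-1=Y+1$, where $Y$ counts how many of the $2(n-2)$ \emph{other} potential edges incident to $\{u,v\}$ are present in $G$. Since the remaining $m-1$ edges of $G$ are uniform over the remaining $\binom{n}{2}-1$ potential edges, $Y$ follows a hypergeometric distribution with population $N=\binom{n}{2}-1$, $K=2n-4$ successes, and sample size $m-1$.

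The heart of the proof is evaluating $\mathbb{E}[1/(Y+1)]$. I would use the absorption identity $\binom{K}{y}/(y+1)=\binom{K+1}{y+1}/(K+1)$ to rewrite this expectation as
\[
\mathbb{E}\bigg[\frac{1}{Y+1}\bigg]=\frac{1}{(K+1)\binom{N}{m-1}}\sum_{j\ge 1}\binom{K+1}{j}\binom{N-K}{m-j},
\]
and then recognize the $j$-sum, extended to include $j=0$, as the Vandermonde convolution $\binom{N+1}{m}$; subtracting the $j=0$ contribution $\binom{N-K}{m}$ gives a clean closed form. Combining with the factor $P(e\in E(G))=m/\binom{n}{2}$ collapses the prefactor via the identity $m\binom{N+1}{m}=(N+1)\binom{N}{m-1}$, and multiplying the resulting per-edge contribution by $\binom{n}{2}$ yields
\[
\mathbb{E}\bigl(\kappa(G,B)\bigr)=\frac{\binom{n}{2}}{2n-3}\Bigg(1-\frac{\binom{\binom{n}{2}-2n+3}{m}}{\binom{\binom{n}{2}}{m}}\Bigg).
\]
Finally, I would invoke the elementary falling-factorial identity
\[
\frac{\binom{\binom{n}{2}-2n+3}{m}}{\binom{\binom{n}{2}}{m}}=\frac{\binom{\binom{n}{2}-m}{2n-3}}{\binom{\binom{n}{2}}{2n-3}}
\]
to convert the expression into the form stated in the corollary. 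The main obstacle is the hypergeometric manipulation in the third paragraph: finding the rewriting that makes Vandermonde's identity apply. The remaining steps are routine bookkeeping with standard binomial identities.
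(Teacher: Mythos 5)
Your proposal is correct and follows essentially the same route as the paper's proof: apply Theorem~\ref{thm:numcomp} and linearity of expectation edge by edge, recognize that $d'(e)$ is hypergeometric for an edge conditioned to lie in $G$, and evaluate $\mathbb{E}\left[1/(d'(e)+1)\right]$ via the absorption identity followed by Vandermonde's convolution. The only cosmetic difference is that you take the $2n-4$ potential incident edges as the hypergeometric ``successes'' while the paper takes the $m-1$ remaining edges, which is why you need the final binomial symmetry $\binom{M-a}{b}\binom{M}{a}=\binom{M-b}{a}\binom{M}{b}$ to reach the stated form, whereas the paper's parametrization lands on it directly.
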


\begin{proof}
Consider a fixed edge $e$.  For a random graph in $G (n, m)$, we have that $d'(e)$ is hypergeometric with the following parameters.
\begin{itemize}
  \item Population size:  $N := {n \choose 2} - 1$.
  \item Number of edges (``successes'') in population:  $K := m - 1$.
  \item Number of trials:  $t := 2 (n - 2)$.
\end{itemize}
This allows us to conclude
\[
  \mathbb{P} \left( 
    d' (e) = d
  \right) = \frac{ {K \choose d} {N - K \choose t-d} }{ {N \choose t} },
\]
so we now calculate (starting in the second line $\widehat{{*}} = {*}+1$)
\begin{align*}
  \mathbb{E}\left( \frac{1}{d'(e) +1} \right)
  &= \sum_{d=0}^t \left( \frac{1}{d+1} \right)
  \frac{ {K \choose d} {N - K \choose t - d} }{ {N \choose t} } 
  = \sum_{d=0}^t \left( \frac{1}{K+1} \right)
  \frac{ {K + 1 \choose d + 1} {N - K \choose t - d} }{ {N \choose t} } \\
  &= \frac{1}{\widehat{K}} \sum_{\widehat{d}=1}^{\widehat{t}}
  \frac{
    {\widehat{K} \choose \widehat{d}}
    {\widehat{N} - \widehat{K} \choose \widehat{t} - \widehat{d}}
  }{ {\widehat{N} - 1 \choose \widehat{t} - 1} }
  %&\text{Def. $\widehat{x} := x+1$.} \\
  = \frac{1}{\widehat{K}} \sum_{\widehat{d}=1}^{\widehat{t}}
  \frac{
    {\widehat{K} \choose \widehat{d}}
    {\widehat{N} - \widehat{K} \choose \widehat{t} - \widehat{d}}
  }{ \frac{\widehat{t}}{\widehat{N}}{\widehat{N} \choose \widehat{t}} } 
  \\&= \frac{\widehat{N}}{\widehat{t}\widehat{K}} \sum_{\widehat{d}=1}^{\widehat{t}}
  \frac{
    {\widehat{K} \choose \widehat{d}}
    {\widehat{N} - \widehat{K} \choose \widehat{t} - \widehat{d}}
  }{ {\widehat{N} \choose \widehat{t}} } 
  = \frac{\widehat{N}}{\widehat{t}\widehat{K}} \bigg( \underbrace{\bigg(
    \sum_{\widehat{d}=0}^{\widehat{t}}
    \frac{
      {\widehat{K} \choose \widehat{d}}
      {\widehat{N} - \widehat{K} \choose \widehat{t} - \widehat{d}}
    }{ {\widehat{N} \choose \widehat{t}} }
  \bigg)}_{=1}
  - \frac{ {\widehat{N} - \widehat{K} \choose \widehat{t}} }
  { {\widehat{N} \choose \widehat{t}} }
  \bigg)
  \\&= \frac{\widehat{N}}{\widehat{t}\widehat{K}} \bigg( 1
  - \frac{ {\widehat{N} - \widehat{K} \choose \widehat{t}} }
  { {\widehat{N} \choose \widehat{t}} }
  \bigg) 
  = \frac{{n \choose 2}}{(2n - 3) m}
  \bigg(
    1 - \frac{ {{n \choose 2} - m \choose 2n - 3} }
    { {{n \choose 2} \choose 2n - 3} }
  \bigg).
\end{align*}

Now, we have that
\[
\mathbb{E}(\kappa(G, B))
= \sum_{e \in E( G )} \mathbb{E} \bigg( \frac{1}{d'(e) + 1} \bigg)
= \sum_{e \in E( G )} \frac{{n \choose 2}}{(2n - 3) m}
  \bigg(
    1 - \frac{ {{n \choose 2} - m \choose 2n - 3} }
    { {{n \choose 2} \choose 2n - 3} }
  \bigg)
\]
and since the particular edge $e$ plays no role in the sum, we conclude
\[  \mathbb{E}(\kappa(G, B))
    = \frac{{n \choose 2}}{(2n - 3)}
  \bigg(
    1 - \frac{ {{n \choose 2} - m \choose 2n - 3} }
    { {{n \choose 2} \choose 2n - 3} }
  \bigg).\qedhere
\]
\end{proof}

\section{Recurrence relationship}\label{sec:recurrence}
Computing $p_G(x)$ by computing the probabilities directly is difficult because there are $|E(G)|!$ possible orderings to consider.  However, we can simplify the process by giving a recurrence relationship that relates $p_G(x)$ to some of the polynomials  $p_H(x)$ where the $H$ are subgraphs of $G$.  The key is to group the edge orderings by the \emph{last} edge considered.  Note that if the last edge is incident to a leaf, then it will be included in the graph and otherwise it will not be included (because both the incident vertices would have occurred earlier).  We will assume that the graph $G$ has no isolated edges.

If we make the convention that $p_{K_1}(x)=1$, or equivalently that isolated vertices don't contribute to the component count (hence the last edge doesn't effect component count), then we have the following:
\begin{align*}
|E(G)|!p_G(x)&=\sum_kF(G,k)x^k\\
&=\sum_{e\in E(G)}\bigg(\sum_kF(G-e,k)x^k\bigg)\\
&=\sum_{e\in E(G)}\bigg((|E(G)|-1)!\sum_kP(G-e,k)x^k\bigg)\\
&=(|E(G)|-1)!\sum_{e\in E(G)}p_{G-e}(x).
\end{align*}

We can summarize this in the following result which can be used to efficiently compute $p_G(x)$ recursively.

\begin{theorem}\label{thm:recurrence}
Let $p_{K_1}(x)=1$, $p_{K_2}(x)=x$.  Then for the disjoint union of graphs $G$ and $H$, denoted $G\,\mathaccent\cdot\cup\, H$ we have
\[
p_{G\mathaccent\cdot\cup H}(x)=p_G(x)p_H(x),
\]
and for any graph $G$ without isolated edges we have
\[
p_G(x)=\frac{1}{|E(G)|}\sum_{e\in E(G)}p_{G-e}(x).
\]
\end{theorem}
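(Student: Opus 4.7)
The plan is to handle the two assertions separately. For the multiplicative identity on $G\,\mathaccent\cdot\cup\, H$, take a uniformly random permutation $B$ of $E(G) \sqcup E(H)$ and note that its restrictions to $E(G)$ and $E(H)$ are independent uniform permutations of those sets (a standard property of uniform random orderings). Since vertices in $G$ and in $H$ are disjoint, the criterion ``incident to a previously unseen vertex'' applied to an edge of $G$ depends only on the earlier edges of $G$ in $B$, and similarly for $H$. So the forest built on $G\,\mathaccent\cdot\cup\, H$ is the disjoint union of the two forests built on $G$ and on $H$ under the respective restricted orderings, and the number of components adds. Writing $p_G(x) = \mathbb{E}\bigl[x^{\kappa(G,B)}\bigr]$ and using independence, the expectation factors and yields $p_{G\,\mathaccent\cdot\cup\, H}(x) = p_G(x)p_H(x)$; the convention $p_{K_1}(x) = 1$ is consistent with this since adjoining an isolated vertex adds no edges and no components.

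For the recurrence, the approach is to partition the $m! = |E(G)|!$ orderings of $E(G)$ according to which edge occupies the last position. Each edge $e$ occurs last in exactly $(m-1)!$ orderings, which are in natural bijection with the $(m-1)!$ orderings of $E(G-e)$. The key claim to establish is that for any such ordering, the number of components of the forest built on $G$ equals the number of components of the forest built on $G-e$ under the truncated ordering, interpreting $p_{G-e}(x)$ via the multiplicativity from the first part when $G-e$ has isolated vertices. Once this is in hand, summing gives $F(G,k) = \sum_{e\in E(G)} F(G-e,k)$, and dividing by $m!$ yields the desired recurrence.

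The claim is proved by a short case analysis on $e = uv$, using the no-isolated-edges hypothesis to ensure at least one of $u,v$ has another incident edge in $G$. If both $u$ and $v$ have degree at least $2$ in $G$, then both retain incident edges in $G-e$, both are eventually seen while processing $e_1,\dots,e_{m-1}$, and hence $e$ is discarded when considered last; in this case the forests on $G$ and on $G-e$ produced by the paired orderings coincide exactly. If, without loss of generality, $u$ is a leaf of $G$ so that $e$ is $u$'s only incident edge, then $u$ becomes isolated in $G-e$; the first $m-1$ steps see $v$ but not $u$, so processing $e$ last appends $u$ as a pendant leaf to $v$'s tree, increasing that tree's size by one but leaving the component count unchanged. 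Under the convention $p_{K_1}(x)=1$ and the disjoint-union factorization from the first part, the isolated $u$ in $G-e$ contributes no extra component, so the counts still match.

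The main obstacle is this second case: carefully verifying that the bookkeeping works when $G-e$ acquires an isolated vertex, and confirming that the $p_{K_1}(x)=1$ convention together with the multiplicative identity (proved first) is exactly what is needed to make $p_{G-e}(x)$ mean the right thing. Everything else is a clean bijection between orderings of $E(G)$ ending in $e$ and orderings of $E(G-e)$, followed by linear combination and normalization.
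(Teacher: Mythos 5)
Your proposal is correct and follows essentially the same route as the paper: group the $m!$ orderings by the last edge, observe that the last edge either attaches a pendant vertex to an existing tree or is discarded (so the component count agrees with that of $G-e$ under the truncated ordering, with the $p_{K_1}(x)=1$ convention absorbing any isolated vertex of $G-e$), and obtain the product formula for disjoint unions from the independence of the process on the two vertex-disjoint parts. Your case analysis on whether $e$ has a leaf endpoint simply makes explicit what the paper states in one line.
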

\begin{proof}
The only thing that remains is the disjoint union result.  For this we note that when the graph has disjoint components then the edges in one component have no effect in another component, i.e., the edge revealing process can be run independently in each component.  The result now follows by noting 
\[
P(G\,\mathaccent\cdot\cup \,H,k)=\sum_\ell P(G,\ell)P(H,k-\ell).\qedhere
\]
\end{proof}

\subsection{Edge-transitive graphs}
In one special case of Theorem~\ref{thm:recurrence} the recurrence simplifies tremendously at the first stage.  Namely for edge-transitive graphs, i.e., graphs where for any two edges $e_1$ and $e_2$ there is an automorphism of the graph sending $e_1$ to $e_2$.  We note that both complete graphs and complete bipartite graphs belong to this family.

\begin{corollary}\label{cor:edgetrans}
Let $G$ be an edge-transitive graph without isolated edges, and let $H=G-e'$ where $e'$ is some edge of $G$.  Then $p_G(x)=p_H(x)$.
\end{corollary}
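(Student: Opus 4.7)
The plan is to apply the recurrence in Theorem~\ref{thm:recurrence} directly and exploit edge-transitivity to collapse the sum to a single term. Since $G$ has no isolated edges by hypothesis, the recurrence gives
\[
p_G(x) = \frac{1}{|E(G)|}\sum_{e \in E(G)} p_{G-e}(x).
\]

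The main step is then to show that every summand equals $p_H(x)$. Edge-transitivity provides, for each $e \in E(G)$, an automorphism $\phi \in \mathrm{Aut}(G)$ with $\phi(e) = e'$. Restricting $\phi$ to $V(G)$ and to $E(G)\setminus\{e\}$ yields a graph isomorphism $G - e \to G - e'$. Since the forest-building process and the resulting polynomial $p_H$ depend only on the isomorphism type of $H$ (the process is purely combinatorial and orderings of $E(H)$ correspond bijectively under any isomorphism), we conclude $p_{G-e}(x) = p_{G-e'}(x) = p_H(x)$ for every $e \in E(G)$. Plugging back in,
\[
p_G(x) = \frac{1}{|E(G)|}\sum_{e \in E(G)} p_H(x) = p_H(x).
\]

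There is no real obstacle here; this is essentially a one-line consequence of the recurrence once the isomorphism invariance of $p_G$ is noted. The only bookkeeping point worth flagging is that $H = G - e'$ could contain isolated vertices, so strictly speaking one is applying $p$ to a graph outside the class originally considered. This is resolved by the convention $p_{K_1}(x) = 1$ together with the disjoint-union multiplicativity in Theorem~\ref{thm:recurrence}, which make $p_H(x)$ well defined by first stripping off isolated vertices and then running the process on the remainder.
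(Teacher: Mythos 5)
Your proof is correct and follows essentially the same route as the paper: apply the recurrence of Theorem~\ref{thm:recurrence} and use edge-transitivity to conclude that every $G-e$ is isomorphic to $G-e'$, collapsing the sum to $p_H(x)$. Your extra remarks on isomorphism invariance of $p_G$ and on handling isolated vertices via the convention $p_{K_1}(x)=1$ are sound elaborations of points the paper leaves implicit.
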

\begin{proof}
Fix $e'\in E(G)$ where $G$ is an edge-transitive graph.  Then by Theorem~\ref{thm:recurrence} we have
\begin{multline*}
p_G(x)=\frac{1}{|E(G)|}\sum_{e\in E(G)}p_{G-e}(x)=
\frac{1}{|E(G)|}\sum_{e\in E(G)}p_{G-e'}(x)\\
=\frac{1}{|E(G)|}|E(G)|p_H(x)=p_H(x),
\end{multline*}
where the second equality follows by the transitivity of edges, i.e., removal of any edge produces the same graph.
\end{proof}

This can be used to construct pairs of graphs $G$ and $H$ with $p_G(x)=p_H(x)$ by letting $G$ be an edge transitive graph and $H$ the graph obtained by deleting one edge from $G$.  Examples include complete graphs, complete bipartite graphs, cycles and so on.

There are other examples of pairs of connected graphs on the same number of vertices with the same polynomials, but they seem rare.  For example up through nine vertices there are fifteen such pairs not explained by Corollary~\ref{cor:edgetrans}.  Most of these appear erratic, but there is one construction which explains six of these pairs.  Namely let $G_{2k+1}=K_{k,k+1}+e$ where $e$ is an edge connecting two vertices in the larger part (see Figure~\ref{fig:G9} for $G_9$).  Then for $k=2,3,4$ we have that $G_{2k+1}$ and $K_{k,k+1}$ have the same polynomial, the other three pairs are $G_{2k+1}$ and $K_{k,k+1}$ with some edge removed.  (Note that for $k=1$ we have that $G_3=K_3$ and $K_{1,2}=P_3$ which we know have the same polynomials by Corollary~\ref{cor:edgetrans}.)

\begin{figure}[htb]
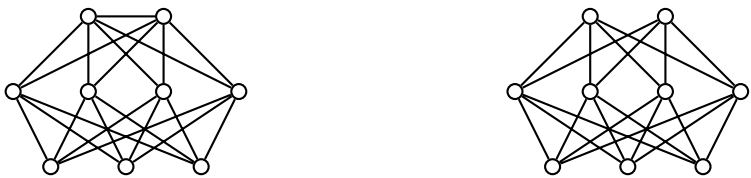

\centering

\picE

\caption{The graphs $G_9$ and $K_{4,5}$}
\label{fig:G9}
\end{figure}

\begin{conjecture}
For $k\ge 1$ the graph $G_{2k+1}=K_{k,k+1}+e$, where $e$ is an edge connecting two vertices in the larger part, has the same polynomial as $K_{k,k+1}$.
\end{conjecture}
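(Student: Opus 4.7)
The case $k=1$ is immediate from Corollary~\ref{cor:edgetrans}, since $G_3=K_3$ is edge-transitive and $K_3-e=K_{1,2}$. For general $k$ the plan is to couple the two forest-building processes via a random-time reformulation of the process: assign i.i.d.\ uniform times $t(f)\in[0,1]$ to each edge and process edges in order of increasing time; then the number of components equals the number of edges which are the earliest incident edge at \emph{both} of their endpoints (``doubly-first'' edges). I would couple $G_{2k+1}$ with $K_{k,k+1}$ by using the same times on the shared bipartite edges and an independent uniform time $t(e)$ on the extra edge $e=b_1b_2$.

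Writing $T(b_i)$ for the first-touch time of $b_i$ in $K_{k,k+1}$ and $s_i\in\{0,1\}$ for the indicator that the bipartite edge realizing $T(b_i)$ is doubly-first in $K_{k,k+1}$, a case analysis on the relative order of $t(e)$, $T(b_1)$, $T(b_2)$ and on $(s_1,s_2)$ shows that $\kappa(G_{2k+1})-\kappa(K_{k,k+1})\in\{-1,0,1\}$ and identifies which value occurs. Fixing an ordering $\tau$ of the bipartite edges and letting $e$ occupy each of its $k(k+1)+1$ insertion slots yields an explicit breakdown of the resulting component counts in terms of $u(\tau)=\min(T(b_1),T(b_2))$, $v(\tau)=\max(T(b_1),T(b_2))$ and $(s_1,s_2)$.

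Summing this breakdown over all $\tau$ and comparing $F(G_{2k+1},j)$ with $(k(k+1)+1)\,F(K_{k,k+1},j)$, the conjecture reduces to the single identity
\[
U(j-1)+V(j+1)+W(j+1)=U(j)+V(j)+W(j)\qquad\text{for all }j,
\]
where $U(j),V(j),W(j)$ are the sums of $u(\tau),v(\tau),v(\tau)-u(\tau)$ respectively, taken over orderings $\tau$ with $\kappa(\tau)=j$ and with $(s_1,s_2)$ lying in the strata $(0,0)$, $(1,1)$, and $\{(0,1),(1,0)\}$ with the later-touched vertex having $s=1$, respectively.

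The main obstacle is proving this identity. Two plausible routes are: (a) extract closed forms for $U,V,W$ using the explicit formula of Theorem~\ref{thm:bipartite} together with the joint distribution of $(\kappa(\tau),T(b_1),T(b_2),s_1,s_2)$ for uniformly random $\tau$, reducing the claim to a hypergeometric identity in the spirit of the verification at the end of the proof of Theorem~\ref{thm:bipartite}; or (b) construct a sign-reversing involution on orderings of $E(K_{k,k+1})$ which pairs the $(0,0)$-stratum at $\kappa=j-1$ with the $(1,1)$- and mixed-strata at $\kappa=j,j+1$. Route (b) is more conceptual and should exploit the $b_1\leftrightarrow b_2$ symmetry together with the edge-transitivity of $K_{k,k+1}$, but the combinatorial bookkeeping in either route is where the real work lies.
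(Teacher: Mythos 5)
This statement is an open \emph{conjecture} in the paper: the authors offer no proof, only the observation that $k=1$ follows from Corollary~\ref{cor:edgetrans} (since $G_3=K_3$ and $K_3-e=K_{1,2}$) and computational verification for $k=2,3,4$. So there is no proof in the paper to compare yours against, and the relevant question is whether your proposal actually closes the conjecture. It does not. Your setup is sound and even makes genuine progress: the reformulation of the component count as the number of ``doubly-first'' edges is correct (it is the same observation that drives Theorem~\ref{thm:numcomp}), the coupling by inserting $e=b_1b_2$ into each of the $k(k+1)+1$ slots of an ordering $\tau$ of $E(K_{k,k+1})$ is the right bookkeeping, and I can confirm that only $e$ itself and the two edges realizing $T(b_1)$, $T(b_2)$ can change doubly-first status, so the difference in component count is $\mathbf{1}[t(e)<\min(T(b_1),T(b_2))]-s_1\mathbf{1}[t(e)<T(b_1)]-s_2\mathbf{1}[t(e)<T(b_2)]\in\{-1,0,1\}$, and stratifying by $(s_1,s_2)$ does reduce $F(G_{2k+1},j)=(k(k+1)+1)F(K_{k,k+1},j)$ to the stated identity $U(j-1)+V(j+1)+W(j+1)=U(j)+V(j)+W(j)$.

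The gap is that this identity is precisely where all the content of the conjecture now resides, and you do not prove it; you name two possible routes and explicitly defer ``the real work.'' Neither route is obviously tractable: route (a) requires the joint distribution of $\bigl(\kappa(\tau),T(b_1),T(b_2),s_1,s_2\bigr)$, which is substantially more refined than anything Theorem~\ref{thm:bipartite} or its proof provides (the recurrence for $Q_{s,t}(a,b,\ell)$ tracks only how many vertices of each side have been touched, not \emph{when} two designated vertices of the larger part were first touched nor whether those touching edges were doubly-first); route (b) asks for a sign-reversing involution that is not exhibited. A correct proof would need to either establish the $U,V,W$ identity or find a different argument altogether; as written, the proposal is a reduction of one open problem to another, together with a correct treatment of the single case $k=1$ that the paper already notes.
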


\subsection{Paths}
The polynomial $p_G(x)$ implicitly relies on understanding the polynomial for all connected subgraphs of $G$.  Since the set of non-isomorphic connected subgraphs tends to be large, this makes it difficult to determine $p_G(x)$.  There are however some exceptions, the simplest being paths which only has paths as connected subgraphs.  Let $P_n$ denote the path on $n$ vertices and let $f_n(x)=p_{P_{n+1}}(x)$ under the convention of Theorem~\ref{thm:recurrence}, i.e., $f_0(x)=1$, $f_1(x)=x$, and so on.

The following result on generating functions for paths is similar to the one proved by Chung and Graham \cite{CG}.

\begin{theorem}\label{thm:paths}
Let $Q(t)=\sum_{n\ge 0}f_n(x)t^n$.  Then for $x>1$,
\[
Q(t)=\sqrt{x-1}\tan\bigg(t\sqrt{x-1}+\arctan\bigg(\frac1{\sqrt{x-1}}\bigg)\bigg).
\]
\end{theorem}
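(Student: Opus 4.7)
The plan is to turn Theorem~\ref{thm:recurrence} into a recurrence for the coefficients $f_n(x) = p_{P_{n+1}}(x)$, translate it into a first-order differential equation for $Q(t)$, and then solve that ODE in closed form by separation of variables.

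First I would label the edges of $P_{n+1}$ as $e_1,\ldots,e_n$ and apply Theorem~\ref{thm:recurrence} for $n \ge 2$ (where no edge of $P_{n+1}$ is isolated). Deleting $e_k$ disconnects $P_{n+1}$ into a path on $k$ vertices and a path on $n-k+1$ vertices, and the disjoint-union product rule, together with the convention $p_{K_1}(x) = 1$ which absorbs the boundary cases $k=1$ and $k=n$ uniformly, gives
\[
n f_n(x) \;=\; \sum_{k=1}^{n} f_{k-1}(x) f_{n-k}(x) \;=\; \sum_{j=0}^{n-1} f_j(x) f_{n-1-j}(x), \qquad n \ge 2.
\]
The base value $f_1(x)=x$ is taken directly from the definition, since $P_2$ is an isolated edge and Theorem~\ref{thm:recurrence} does not apply there.

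Next I would multiply the recurrence by $t^{n-1}$ and sum over $n \ge 2$. The left-hand side is $Q'(t) - f_1(x) = Q'(t) - x$, while the right-hand side is the Cauchy product expansion of $Q(t)^2$ with its $t^0$ coefficient $f_0(x)^2 = 1$ removed. The naive $n=1$ entry of the convolution would read $f_0 f_0 = 1$ rather than the true value $x$, and this off-by-one discrepancy is exactly what produces the inhomogeneous constant term in the resulting Riccati-type ODE
\[
Q'(t) \;=\; Q(t)^2 + (x-1), \qquad Q(0) = 1.
\]

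Finally, for $x > 1$ I would set $c = \sqrt{x-1}$. The ODE separates as $dQ/(Q^2 + c^2) = dt$, which integrates to $\tfrac{1}{c}\arctan(Q/c) = t + C$; the initial condition $Q(0)=1$ forces $C = \tfrac{1}{c}\arctan(1/c)$, and solving for $Q$ yields the claimed closed form. The main obstacle is bookkeeping rather than analysis: one must track the isolated-edge exception at $n=1$ so that the off-by-one picks up precisely the $(x-1)$ term in the ODE, and one must correctly invoke $p_{K_1}=1$ to fold the two endpoint edge deletions (each of which strands a single vertex) into the same convolution sum as the interior deletions.
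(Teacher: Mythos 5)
Your proposal is correct and follows essentially the same route as the paper: derive the convolution recurrence $nf_n=\sum_{j=0}^{n-1}f_jf_{n-1-j}$ from Theorem~\ref{thm:recurrence}, translate it into the Riccati equation $Q'=Q^2+(x-1)$ with $Q(0)=1$, and solve. The only (harmless) difference is that you solve the ODE explicitly by separation of variables, whereas the paper merely states that the claimed formula can be verified to satisfy it.
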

\begin{proof}
By Theorem~\ref{thm:recurrence} we have for $n\ge 2$
\[
nf_n(x)=f_0(x)f_{n-1}(x)+f_1(x)f_{n-2}(x)+\cdots+f_{n-1}(x)f_0(x).
\]
In terms of the generating function $Q(t)$ the left hand side corresponds to the coefficients of $Q'(t)$ and the right hand side corresponds to the coefficients of $(Q(t))^2$ and then we only need to correct for the constant term.  In particular we have
\[
Q'(t)=\big(Q(t)\big)^2+(x-1)\qquad\text{with }Q(0)=1.
\]
This is a differential equation and it can be readily verified that the solution to this differential equation is the function
\[
Q(t)=\sqrt{x-1}\tan\bigg(t\sqrt{x-1}+\arctan\bigg(\frac1{\sqrt{x-1}}\bigg)\bigg).\qedhere
\]
\end{proof}

\subsection{One component}
The probability that the graph will have one component at the end of the process is the coefficient of $x$ in the polynomial $p_G(x)$. We can use the recurrence from Theorem~\ref{thm:recurrence} for this special case.  The key observation to make is that if $e$ is a \emph{large bridge}, namely an edge whose removal leaves two components with at least one edge in each component, then the coefficient of $x$ in $p_{G-e}(x)$ is $0$ (i.e., we cannot have one component).  On the other hand if we are not a large bridge then having $e$ occur last will have no impact on whether the final process will have a single component.  So if we let $B(G)$ be the set of large bridges we have
\[
P(G,1) = \frac{1}{|E(G)|}\sum_{e\in E(G)\setminus B(G)}P(G-e,1).
\]

For stars it is easy to see that $P(K_{1,s},1)=1$, but for most graphs we would expect that the probability of having one component is extremely small.  As an example for the cycle $C_n$ the probability of one component is $n2^{n-2}/n!$ which is super-exponentially small in $n$.

This behavior though is in some sense an outlier and we establish exponential bounds for having a single component for a large family of graphs (including almost all ``random'' graphs).  To begin we will need the notion of the Cheeger constant.

\begin{definition}
The \emph{Cheeger constant} of a graph $G$, denoted $\Phi(G)$, is 
\begin{equation}\label{eq:cheeger}
\Phi(G) = \min_{\substack{ X \subseteq V(G) \\ \vol(X) \leq \frac{1}{2} \vol(V)}} \frac{|E(X,V\setminus X)|}{\vol(X)},
\end{equation}
where $E(X,Y)$ is the set of edges joining $X$ and $Y$ and $\vol(X)=\sum_{v\in X}d(v)$.
\end{definition}

The Cheeger constant is a way to measure the efficiency of cutting a graph into two components and so is a measure of connectivity.  It is known that if the Cheeger constant for a family of graphs is bounded away from zero then the graphs behave quasi-randomly (for a thorough treatment of the Cheeger constant see \cite{fan}).

\begin{theorem}\label{THEOREM}
Fix $\epsilon > 0$.  Then there exist real numbers $c = c(\epsilon)>0$ and $C<1$, so that if $G$ is a $d$-regular graph with $\Phi(G) \geq \epsilon$, then
\[
c^{n} < P(G,1) < C^{n}.  
\]
\end{theorem}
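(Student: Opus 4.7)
\medskip
\textbf{Proof proposal.}

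The plan is to track the forest-building process through its $n-1$ \emph{events}: the steps at which new vertices enter the visited set. Let $W_k$ denote the visited set just after the $k$-th event and set $N_k:=|W_k|-|W_{k-1}|\in\{1,2\}$. Then $N_1=2$, and $P(G,1)$ equals the probability that $N_k=1$ for every $2\le k\le n-1$. The starting observation is that, conditional on $W_{k-1}=W$, the next event edge is uniformly distributed over $E(W,\bar W)\cup E(\bar W,\bar W)$: every previously revealed edge lies in $G[W]$ (it was discarded without changing $W$), so by symmetry of the random edge ordering the remaining $\bar W$-incident edges appear in uniform order. Using $d$-regularity,
\[
q(W)\;:=\;P(N_k=1\mid W_{k-1}=W)\;=\;\frac{2|E(W,\bar W)|}{d|\bar W|+|E(W,\bar W)|}.
\]
Because the conditioning $\{N_1=2,\ N_2=\dots=N_{k-1}=1\}$ forces $|W_{k-1}|=k$, the chain rule yields
\[
P(G,1)\;=\;\prod_{k=2}^{n-1}\mathbb{E}[\,q(W_{k-1})\mid N_1=2,\ N_j=1\text{ for }2\le j<k\,].
\]

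For the upper bound we would use only $d$-regularity via $|E(W,\bar W)|\le\vol(W)=d|W|$; monotonicity of $x\mapsto 2x/(d|\bar W|+x)$ then gives $q(W)\le 2|W|/n$, a bound depending only on $|W|=k$ and so passing cleanly through the expectation. Discarding factors exceeding $1$, this yields
\[
P(G,1)\;\le\;\prod_{k=2}^{\lfloor n/2\rfloor}\frac{2k}{n},
\]
which by Stirling is at most $\mathrm{poly}(n)\cdot e^{-n/2}$, so any $C\in(e^{-1/2},1)$ works---uniformly in $d$ and $\epsilon$, since this half of the argument does not actually use expansion. For the lower bound we invoke Cheeger: when $|W|=k\le n/2$, $|E(W,\bar W)|\ge\epsilon d k$ yields $q(W)\ge 2\epsilon k/n$, while for $k>n/2$, applying Cheeger to $\bar W$ in place of $W$ gives $q(W)\ge 2\epsilon/(1+\epsilon)$. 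Substituting these $|W|$-only lower bounds gives
\[
P(G,1)\;\ge\;\biggl(\prod_{k=2}^{\lfloor n/2\rfloor}\frac{2\epsilon k}{n}\biggr)\biggl(\frac{2\epsilon}{1+\epsilon}\biggr)^{\lceil n/2\rceil-1},
\]
and a second Stirling calculation shows this is at least $c(\epsilon)^n$ for a suitable $c(\epsilon)>0$.

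The hard part will be making the product formula above fully rigorous, in particular verifying that the next event edge really is uniform on $E(W_{k-1},\bar W_{k-1})\cup E(\bar W_{k-1},\bar W_{k-1})$ conditional on the entire history (not just on $W_{k-1}$), and that this justifies pulling the chain rule inside an expectation. Once that step is in place the remainder is purely calculational, because both the upper bound $q\le 2|W|/n$ and the Cheeger-based lower bounds depend only on $|W|=k$ and hence exit the conditional expectation as deterministic numbers; the final exponential rates emerge from elementary Stirling estimates of $(n/2)!/n^{n/2}$-type products.
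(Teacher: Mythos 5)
Your proposal is correct and follows essentially the same route as the paper: condition on the times at which new vertices appear, compute the probability $\frac{2|E(W,\bar W)|}{d|\bar W|+|E(W,\bar W)|}$ that the next event edge crosses the cut, bound it above by $2|W|/n$ via regularity and below via the Cheeger constant (splitting at $|W|=n/2$), and finish with Stirling. The uniformity step you flag as "the hard part" is routine --- all edges revealed before the $k$-th event have both endpoints in $W_{k-1}$, so the unrevealed edges meeting $\bar W_{k-1}$ appear in uniform relative order --- and the paper itself treats it as implicit.
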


We remark here that the exponential upper bound does not use connectivity in any essential way, but the lower bound is sensitive to connectivity.  Indeed, one can construct $d$-regular graphs which have super exponentially small probabilities of having one component by taking a necklace of graphs which have two vertices of degree $d-1$ and the remaining vertices of degree $d$ (such graphs have low connectivity).  On the other hand there is no construction of a large regular graph which has high probability of having one component.

\begin{proof} 
We randomly generate an ordering of the edges, $e_1, e_2, \ldots, e_{nd/2}$.  Let $1=t_1 < t_2 < t_3 <\cdots$ denote the times in which a new vertex is seen by an edge (since the first edge sees two new vertices there will be at most $n-1$ times a new vertex is seen).  It suffices to estimate the probability that $e_{t_i}$ is incident to a vertex already present.  Let $\mathcal{A}_i$ denote this event.  Then we want to bound 
\[
\mathbb{P}(\bigcap_i \mathcal{A}_i) = \prod_{i = 1}^{n-1} \mathbb{P}(\mathcal{A}_{i} | \bigcap_{j < i} \mathcal{A}_j),  
\]
and so it suffices to estimate 
\[
\mathbb{P}(\mathcal{A}_{i} | \bigcap_{j < i} \mathcal{A}_j).  
\]

To lower bound this, consider any set when the $t_i$-th edge was added.  At this point, the first $t_i-1$ edges are incident to a collection of $i$ (connected) vertices.  We break into two cases depending on whether $i \leq n/2$ or $i > n/2$.   

{\bf Case 1:} $i \leq \frac{n}{2}$

Let $X$ denote the vertices incident to the first $t_i-1$ edges.  We are interested in the probability that the next chosen edge lies in $E(X,V\setminus X)$ given that it either joins a vertex in $X$ to a vertex in $V\setminus X$ or has both incident vertices in $V\setminus X$. We have $\vol(V\setminus X) = (n-i)d$, and the number of edges induced in $V\setminus X$ is $\frac{1}{2} \big((n-i)d - |E(X, V\setminus X)|\big)$.   Thus the probability we are interested in is
\begin{equation}
\frac{|E(X,V\setminus X)|}{\frac{1}{2} \big((n-i)d - |E(X,V\setminus X)|\big) + |E(X,V\setminus X)|} = 
\frac{2|E(X,V\setminus X)|}{(n-i)d + |E(X,V\setminus X)|}. \label{eqn1}
\end{equation}
This is an increasing function of $|E(X,V\setminus X)|$ and by \eqref{eq:cheeger} we have $|E(X,V\setminus X)|\geq di \Phi $.  Also, trivially, $|E(X,V\setminus X)| < di$.

Thus 
\[
\frac{2i}{n}  =  \frac{2di}{(n-i)d + id} > \eqref{eqn1} \geq \frac{2di\Phi }{(n-i)d + di\Phi } = \frac{2i\Phi}{n + (\Phi-1) i}.
\]

{\bf Case 2:} $i > \frac{n}{2}$

This can be bounded similarly, getting a bound of 
\[
1 > \mathbb{P}(\mathcal{A}_i | \bigcap_{j < i} \mathcal{A}_j) > \frac{2di\Phi}{ id + \Phi id} = \frac{2\Phi}{1 + \Phi}.  
\]

Thus a total lower bound on the probability, taking the product all the way up to $n$, is given by 
\begin{align*}
&\bigg( \prod_{i=1}^{n/2} \frac{2\Phi i}{n + (\Phi-1)i} \bigg) \bigg( \prod_{i=n/2+1}^n \frac{2\Phi}{1+\Phi}\bigg) \\
&= \bigg(\frac{2\Phi^2}{1 + \Phi}\bigg)^{n/2} \frac{(n/2)!2^{n/2} }{n^{n/2}} \prod_{i=1}^{n/2} \bigg( \frac{1}{1 + (\Phi-1)i/n}\bigg) \\
&> \bigg(\frac{2\Phi^2}{1 + \Phi}\bigg)^{n/2} \frac{(n/2)!2^{n/2} }{n^{n/2}} \exp\bigg((1-\Phi) \sum_{i=1}^{n/2} \frac{i}{n}  \bigg) \\
&= (1+o(1)) \sqrt{\pi n}  \bigg(\frac{2\Phi^2}{1 + \Phi}\bigg)^{n/2} e^{-n/2} \exp\bigg((1-\Phi) \frac{n}{4}  \bigg)
\end{align*}
which is of the desired form.  For the upper bound we have
\[
\prod_{i=1}^{n/2}\frac{2i}{n}
=\frac{(n/2)!2^{n/2}}{n^{n/2}}=(1+o(1))\sqrt{\pi n}e^{-n/2}
\]
which is also of the desired form.
\end{proof}

As a corollary of this, we observe that almost all $d$-regular graphs on $n$ vertices have this property.  Indeed, Bollob\'as \cite{B} showed that for fixed $d$, the Cheeger constant of a random $d$-regular graph has $\Phi(G) \geq (1 - \eta(d))/2$ with probability $1-o(1)$, where $\eta(d)$ is a number satisfying $2^{4/d} < (1-\eta)^{1-\eta}(1+\eta)^{1+\eta}$.  This, combined with the proof of Theorem~\ref{THEOREM} gives an exponential  lower bound on $P(G,1)$.

For graphs where $d$ is not constant, random $d$-regular graphs are harder to deal with directly.  In particular, a direct analogue of Bollob\'as's result for $d =d(n)$ for some growing function of $n$ is unknown.  However, an analogue to understand $\Phi(G)$ for such graphs is given to us by using graph spectra.  Building on earlier work of Friedman, Kahn and Szemer\'edi \cite{FKS}, Broder, Frieze, Suen and Upfal \cite{BFSU} showed that if $d=o(n^{1/2})$, and $G$ is a random $d$-regular graph, then the second adjacency eigenvalue of $G$ is $\lambda=O(\sqrt{d})$.  Combining this with the Cheeger inequality (see, e.g., Chung \cite{fan})
\[
\frac12\Phi^2<\frac{d-\lambda}{d}\le 2\Phi,
\]
one observes that for random $d$-regular graphs where $d=d(n)$ is growing but $o(n^{1/2})$, then $\Phi(G) \ge \frac{1}{2} - o(1)$, and applying the results of the above theorem again yields an exponential bound. 

\section{Concluding remarks}\label{sec:conclusion}
We have considered a forest building process on simple graphs without isolated vertices and have given the probability of ending up with $k$ trees for complete graphs, complete bipartite graphs, or one of these graphs with a single edge deleted.  For completeness, we give $p_G(x)$ for all graphs on at most five vertices not covered in the preceding results in the Appendix as well as all trees up through seven vertices.

Most of our focus has been on understanding the graph polynomial $p_G(x)$.  The study of graph polynomials has a long and rich history (see \cite{Merino1, Merino2}), which provides ample material for additional exploration.  We note that this $p_G(x)$ is not a simple variant of the Tutte polynomial as there exists graphs on five vertices with the same Tutte polynomial but where the corresponding $p_G(x)$ disagree.

There are still many things that are not known about the forest building process.  For example: Which graph operations work well with the polynomials?  What are some families where $p_G(x)$ can be explicitly computed?  Do the numbers $P(G,k)$ form a log-concave sequence for all graphs $G$?  (This last question has been confirmed for all graphs through $9$ vertices.) Do there exist two trees, $S$ and $T$, on $n$ vertices with $p_S(x)=p_T(x)$?  (Through $n=19$ the answer is no; requiring that they be on the same number of vertices is important because all stars have the same polynomial.)

We hope to see some of these questions, and more, addressed in future work.

\bigskip

\noindent \textbf{Acknowledgements:}~~This research was started at the Graduate Research Workshop in Combinatorics (GRWC) held at Iowa State University in June 2015 and which was supported by the grant NSF DMS 1500662.  The authors thank the anonymous referee for a thorough reading and useful suggestions of the text.

\eject
\section*{Appendix -- $p_G(x)$ for small graphs}

\begin{figure}[htb]
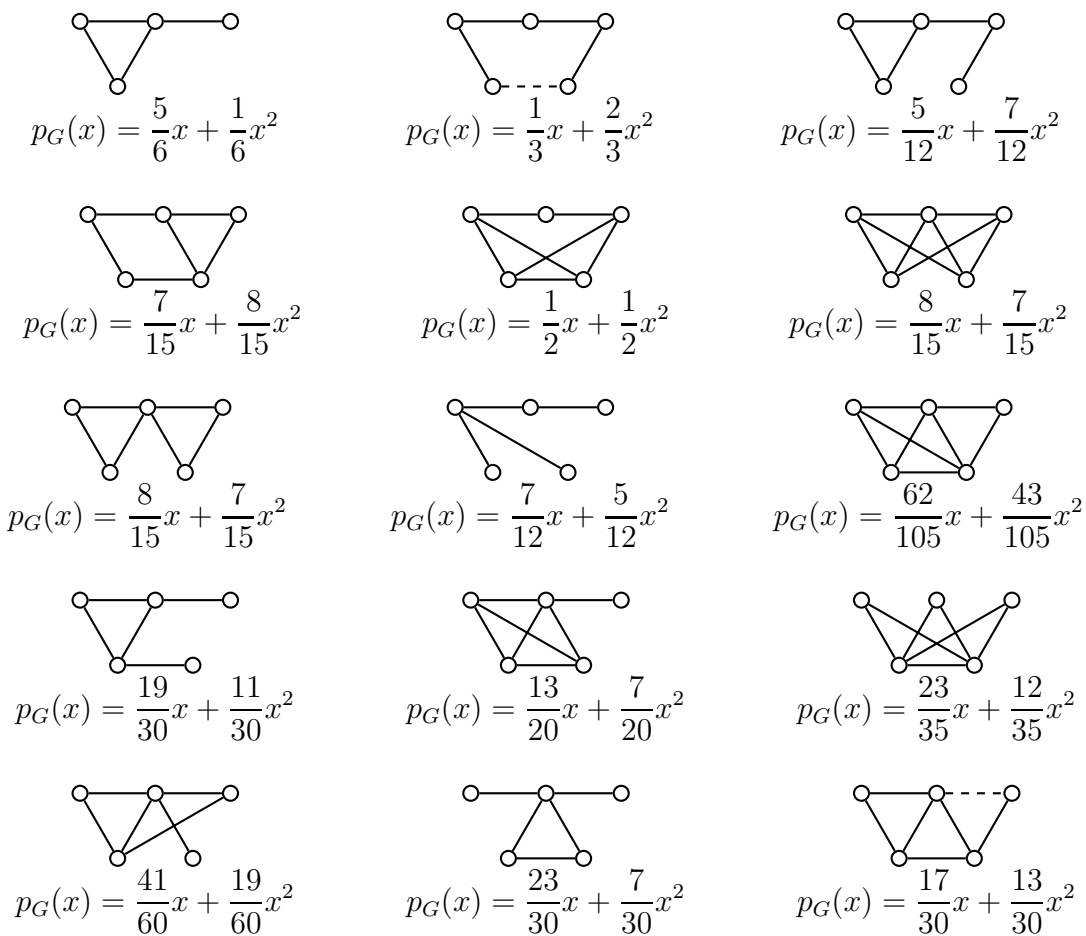

\centering

\picC

\caption{$p_G(x)$ for some small graphs}
\end{figure}

\begin{figure}[htb]
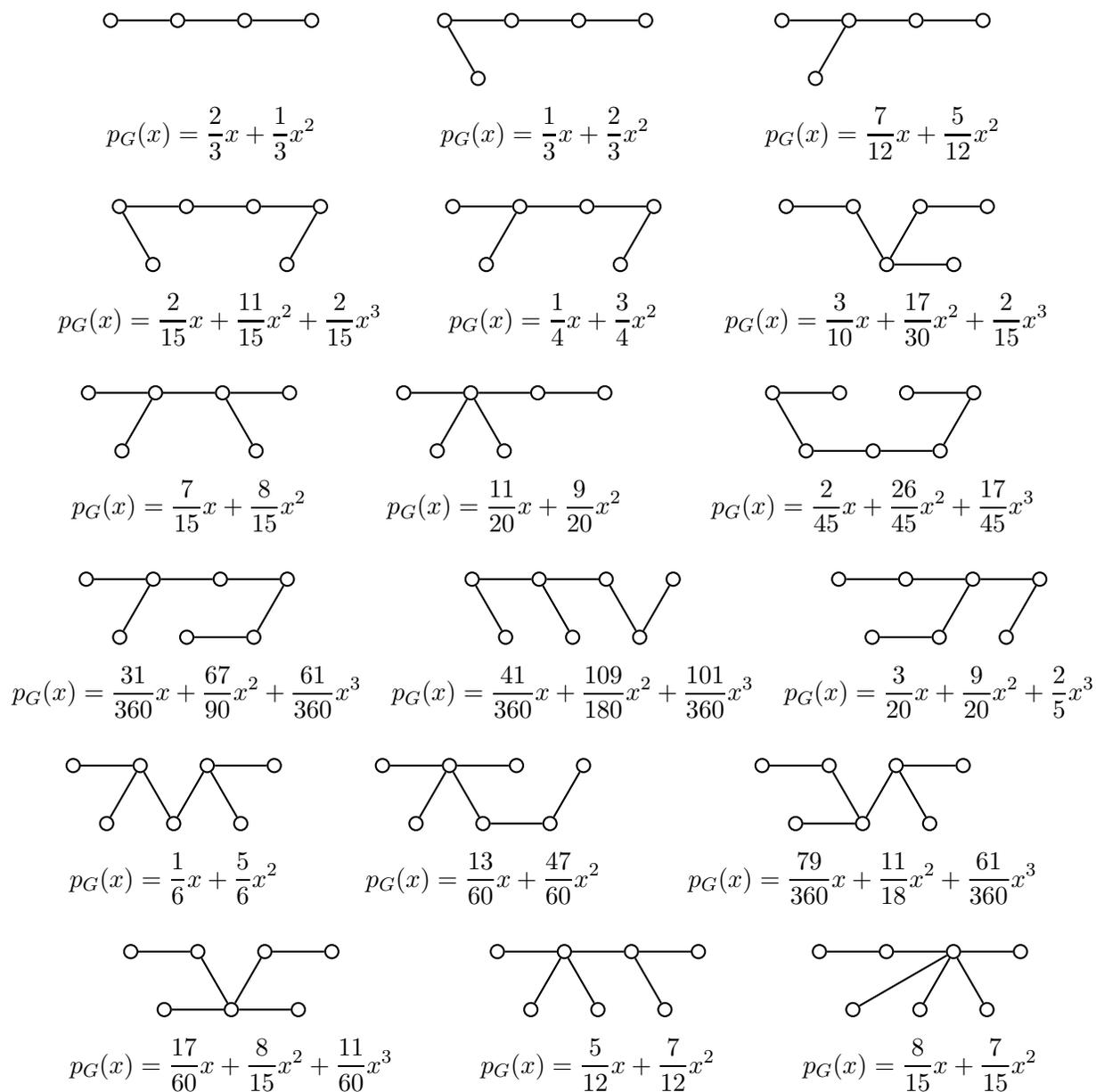

\centering

\picD

\caption{$p_G(x)$ for trees through seven vertices}
\end{figure}
\end{document}